\documentclass[reqno]{amsart}
\usepackage{amssymb,amsmath}
\usepackage{amsthm}
\usepackage{color,graphicx}
\usepackage{hyperref}
\usepackage{verbatim}
%\usepackage[notcite,notref]{showkeys}

%%%%%%%%%%%%%%%%%%%%%%%%%%%%%%%%%%%%%%%%%%%%%%%%%%%%%%%%%%%%%%%%%%%%%%%
%%%%%%%%%%%%%%%%%%%%%%%%%%%%%%%%%%%%%%%%%%%%%%%%%%%%%%%%%%%%%%%%%%%%%%%
\newcommand{\va}{\varphi}
\newcommand{\ka}{\mu}
\newcommand{\kp}{\kappa}
\newcommand{\ps}{\mu}
\newcommand{\Dt}{\mathcal{D}^{\theta}_\xi}

\newcommand{\de}{\delta} 
\newcommand{\D}{D_\xi^\theta} 
\newcommand{\h}{\mathcal H}
\newcommand{\ha}{\hat{\sigma}}
\newcommand{\pa}{\hat{\phi}}
\newcommand{\lan}{\langle \xi \rangle}
\newcommand{\lanx}{\langle x \rangle}
\newcommand{\lanxN}{\langle x \rangle_N}
\newcommand{\p}{\partial}
\newcommand{\les}{\lesssim}
\newcommand{\z}{\mathcal{Z}}
\newcommand{\R}{\mathbb R}

\newcommand{\Z}{\mathbb Z}

\newcommand{\s}{\text{sgn}(\xi)}
%\newcommand{\ps}{\Psi}

%%%%%%%%%%%%%%%%%%%%%%%%%%%%%%%%%%%%%%%%%%%%%%%%%%%%%%%%%%%%%%%%%
%%%%%%%%%%%%%%%%%%%%%%%%%%%%%%%%%%%%%%%%%%%%%%%%%%%%%%%%%%%%%%%%%
%\newcommand{\bbox}{\rule[1mm]{1ex}{1ex}}
%\newcommand{\seccion}[1]{\section{#1}\setcounter{equation}{0}}
\numberwithin{equation}{section}

\newtheorem{theorem}{Theorem}[section]
\newtheorem{proposition}[theorem]{Proposition}

\newtheorem{lemma}[theorem]{Lemma}

%%%%%%%%%%%%%%%%%%%%%%%%%%%%%%%%%%%%%%%%%%%%%%%%%%%%%%%%%%%%%%%%
%%%%%%%%%%%%%%%%%%%%%%%%%%%%%%%%%%%%%%%%%%%%%%%%%%%%%%%%%%%%%%%%
\begin{document}
\vglue-1cm \hskip1cm
\title[The Benjamin equation]{On uniqueness results for the  Benjamin equation}

%\author[]{}
%\address{}
%\email{}
%\thanks{}
%\begin{abstract}
%\end{abstract}

%\maketitle

\author[A. Cunha]{Alysson Cunha}
\address{IME-Universidade Federal de Goi\'as (UFG), 131, 74001-970, Goi\^an\-ia-GO, Bra\-zil}
\email{alysson@ufg.br}
%    Address of record for the research reported here

%\thanks will become a 1st page footnote.
%\thanks{The second author was partially supported by CNPq/Brazil.}
%. The third author was supported by a NSF grant.}

%\author[A. Pastor]{Ademir Pastor}
%\address{IMECC-UNICAMP, Rua S\'ergio Buarque de Holanda, 651, 13083-859, Cam\-pi\-nas-SP, Bra\-zil.}
%\email{apastor@ime.unicamp.br}

%%%%%%%%%%%%%%%%%%%%%%%%%%%%%%%%%%%%%%%%%%%%%%%%%%%%%%%%%%%%%%%%%%%%%%%%%%%%%%%%%%%%%%%%%%%%%%%%

%\subjclass[2000]{Primary 35Q51, 35Q53 ; Secondary 35Q35}

\keywords{Benjamin equation, Cauchy problem, Local well-posedness}

\begin{abstract}
We prove that the uniqueness results obtained in \cite{urrea} for the Benjamin equation, cannot be extended for any pair of non-vanishing solutions. On the other hand, we study uniqueness results of solutions of the Benjamin equation. With this purpose, we showed that for any solutions $u$ and $v$ defined in $\R\times [0,T]$, if there exists an open set $I\subset \R$ such that $u(\cdot,0)$ and $v(\cdot,0)$ agree in $I$, $\p_t u(\cdot,0)$ and $\p_t v(\cdot,0)$ agree in $I$, then $u\equiv v$. To finish, a better version of this uniqueness result is also established.
\end{abstract}

\maketitle

\section{Introduction}\label{introduction}

In this work, we study the initial-value problem (IVP) concerning the Benjamin equation
\begin{equation}\label{b}
\begin{cases}
u_{t}+\mathcal{H}\partial_{x}^{2}u+\p_x^3u+uu_{x}=0, \;\;x,t\in \R\\
u(0,x)=\phi(x),
\end{cases}
\end{equation}
where $u=u(t,x)$ is a real-valued function and $\mathcal{H}$ stands for the Hilbert transform defined as
\begin{equation*}
\begin{split}
\mathcal{H}f(x)&=\frac{1}{\pi} \lim_{\epsilon \downarrow 0}\int_{|y|\geq \epsilon}\frac{f(x-y)}{y}dy\\
               &=-i(\s \hat f (\xi))^{\vee}(x).
\end{split}
\end{equation*}

%$where $\mathrm{p.v.}$ denotes the Cauchy principal value. %Hence, its possible to write

%\begin{equation}
%\widehat{\h f}(\xi)=-i\s \hat f (\xi). 
%\end{equation}

The integral-differential equation \eqref{b} is a mathematical model to describe a class of the intermediate waves in the stratified fluid. It was deduced by Benjamin \cite{B}, to study gravity-capillary surface waves of solitary type on deep water. He also obtained the following conservation laws

\begin{equation}\label{media}
I_1(u)=\int_{-\infty}^\infty u(t,x)dx,
\end{equation}

\begin{equation}\label{norm}
I_2(u)=\int_{-\infty}^\infty u^2(t,x)dx
\end{equation}

and

\begin{equation*}\label{I3}
I_3(u)=\int_{-\infty}^\infty\Bigg \{\frac12  (\p_x u(t,x))^2 -\frac12 u(t,x)\h \p_x u(t,x)-\frac13 u^3 (t,x)\Bigg\}dx.
\end{equation*}

The IVP \eqref{b} has been extensively studied with respect existence, uniqueness and regularity of solutions in the Sobolev space $H^s(\R)$. In \cite{l1} Linares, using the Fourier restriction method together with the conservation quantity \eqref{norm} obtained global well-posedness of the IVP \eqref{b} in $L^2(\R)$. In the last years, this result was improved by many authors. Kosovo et al. \cite{k} proved local well-posedness in $H^s(\R)$, where $s>-3/4$. Li and Wu \cite{lw}, using the I-method, showed global well-posedness of the Benjamin equation in $H^s(\R)$, for $s>-3/4$. Finally, the best result is due a Chen, Guo and Xiao \cite{cgx}, that extended the global well-posedness to $H^s(\R)$ for $s\geq -3/4$.

There are many works about uniqueness properties for dispersive models. We can mentioned that for IVPs associated to the k-generalized Korteweg–de Vries (k-gKdV) equation 

\begin{equation*}
\p_t u+\p_x^3 u + u^k \p_x u=0, \quad t,x\in \R,
\end{equation*}  

and for the semi-linear Schrödinger (NLS) equation %(see \cite{skpv2})

\begin{equation*}
\p_t u=i(\nabla u+V(x,t)u), \quad  \mbox{in} \ \R^n\times [0,T],
\end{equation*} 

Escauriaza, Kenig, Ponce and Vega \cite{ekpv1, ekpv2} obtained uniqueness results for any pair of solutions $u_1$ and $u_2$ in a appropriate class functions. Also in this direction, in \cite{bim1}, for the Ostrovsky equation and in \cite{bim2}, for the Zakharov–Kuznetsov (ZK) equation, Bustamante et al., showed that if the difference of two sufficiently regular solutions $u_1$ and $u_2$ have a suitable decay in two different times, then they are equal. Recently, the results in \cite{bim2} was improved in \cite{cfl}.        %in a suitable class.  

On the other hand, also there exist results on Benjamin equation in the weighted Sobolev spaces 

$$\z_{s,r}=H^s(\R)\cap L^2(x^{2r}dx) \quad \mbox{and} \quad \dot{\z}_{s,r}=\{f\in \z_{s,r}: \hat f (0)=0\},$$
where $s,r\geq 0$.
In such spaces, Urrea \cite{urrea}, studied well-posedness and unique continuation properties of solutions. Here it can be seen that the condition $s\geq 2r$ is necessary for solution $u$ of the Benjamin equation satisfies the persistence property in $\z_{s,r}$. Next, these results are put in order.

The first is about persistence property and well-posedness of solutions. %Here we denote 

\noindent {\bf Theorem A} {\em
\begin{itemize}
\item [(i)] Let $s\geq 1$, $r\in [0,s/2]$ and $r<5/2$. If $\phi \in \z_{s,r}$, then the solution $u(x, t)$ of the IVP \eqref{b} satisfies that $u\in C([0,\infty) : \z_{s,r})$. Furthermore if $s > 3/2$, $r\in [0, s/2]$ and $r < 5/2$, then the IVP \eqref{b} is globally well-posed in $\z_{s,r}$.
\item [(ii)] For $r\in [5/2, 7/2)$, $r\leq s/2$, the IVP \eqref{b} is GWP in $\dot{\z}_{s,r}$. 
\end{itemize}}
\medskip

The second is a necessary condition for the persistence of solution in $\z_{s,r}$, where $s\geq 5$ and $r\geq 5/2$.

\noindent {\bf Theorem B} {\em
Let $u\in C([0,T]:\z_{4,2})$ be a solution of the IVP \eqref{b}. If there exist two different times $t_1, t_2 \in [0,T]$ such that $u(\cdot,t_j)\in \z_{5,5/2}$, for $j=1,2$, then $\hat \phi(0)=0$ and therefore $u(\cdot, t)\in \dot{\z}_{4,2}$, for all $t\in [0,T]$.}
\medskip

In the following, was established the unique continuation property. 

\noindent {\bf Theorem C} {\em
Suppose that $u\in C([0, T ] :\dot{\z}_{7,7/2-})$ is a solution of the IVP \eqref{b}. If there exist three different times $t_1,t_2,t_3\in [0,T ]$ such that $u(\cdot, t_j)\in \dot{\z}_{7,7/2}$, for $j=1,2,3$, then $u(x,t)\equiv 0$.}
\medskip

%Concerning the results of Benjamin equation in weighted Sobolev spaces, Urrea \cite{urrea} showed global well-posedness and unique continuation properties.

%The our main results about read as follows.

%\begin{theorem}\label{no}
%Let $u$ and $v$ solutions of the IVP \eqref{b} satisfying 
%\begin{equation}\label{inicial}
%u(0)=\phi, v(0)=\varphi, \phi\not =\varphi,
%\end{equation} 
%\begin{equation}\label{norma}
%\|\phi\|=\|\va\|,
%\end{equation}
%\begin{equation}\label{v}
%\int \phi(x)dx=\int \va(x)dx=0, %\quad \mathrm{a.e} \quad y\in \R,
%\end{equation}
%\begin{equation}\label{v1}
%\int x\phi(x)dx=\int x\va(x)dx, %\quad \mathrm{a.e} \quad y\in \R,
%\end{equation}
%and
%\begin{equation}
%\phi, \va \in H^{8}(\R)\cap L^2 (x^8dx).
%\end{equation}

 %Then $u\not = v$ and
 %\begin{equation}\label{u-v}
 % u-v \in H^{8}(\R)\cap L^2 (x^{8}dx).
 %\end{equation}
%\end{theorem}
\medskip

%Finally, 

\noindent {\bf Theorem D} {\em
 Suppose that $u\in C(\R;\z_{7,7/2-})$ is a nonzero solution of the IVP \eqref{b}. If $u(0)=\phi\in \dot{\z}_{10,4}$ and
$$\int_{-\infty}^{\infty}x\phi(x)dx\neq 0,$$
then, there exists $t^*\neq 0\in \R$ such that $u(t^*)\in \z_{8,4}$.}
\medskip

As observed in \cite{urrea},  Theorem D shows us that the hypothesis of three different times in Theorem C cannot be reduced for two different times. Hence, we see that it's may expect to have uniqueness results with two different times on $\dot{\z}_{s,r}$, only for decay $r>4$. In addition, about the decay of solutions, we also can mention that the author in \cite{flores}, by using conservation laws of the Benjamin-Ono equation improved the decay results (see Theorem 2) in \cite{flp1}.

Next, we point out the main contributions of this paper. Our first result (see Theorem \ref{non0} below) is inspired by the work of Fonseca, Linares and Ponce \cite{flp1}. Here, the authors, for the Benjamin-Ono (BO) equation

\begin{equation*}
u_{t}+\mathcal{H}\partial_{x}^{2}u+uu_{x}=0, \;\;x,t\in \R,
\end{equation*}
shows that the uniqueness result obtained by Fonseca and Ponce in \cite{fp} cannot be extended to any pair of non-vanishing solutions of the BO equation.  

%On the other hand, about uniqueness of solutions for dispersive models, we recall that was obtained %results for any pair $u_1$ and $u_2$ of solutions in a suitable class, for the IVPs associated to the %k-generalized Korteweg–de Vries (k-gKdV) equation (see \cite{skpv1})

%\begin{equation}
%\p_t u+\p_x^3 u + u^k \p_x u=0, \quad t,x\in \R,
%\end{equation} 
%and the IVP associated to the semi-linear Schrödinger (NLS) equation (see \cite{skpv2})

%\begin{equation}
%\p_t u+\p_x^3 u + u^k \p_x u=0, \quad t,x\in \R.
%\end{equation} 

\begin{theorem}\label{non0}
Let $u$ and $v$ solutions of the IVP \eqref{b} with initial data $\phi$ and $\varphi$, respectively. Suppose that $\phi, \va \in \z_{9-,4}$ satisfies $\phi\neq \varphi$,
%satisfying 
%\begin{equation}\label{inicial}
%u(0)=\phi, v(0)=\varphi, \phi\not =\varphi,
%\end{equation} 
\begin{equation}\label{norma}
\|\phi\|=\|\va\|,
\end{equation}
\begin{equation}\label{v}
\int \phi(x)dx=\int \va(x)dx %\quad \mathrm{a.e} \quad y\in \R
\end{equation}
and
\begin{equation}\label{v1}
\int x\phi(x)dx=\int x\va(x)dx. %\quad \mathrm{a.e} \quad y\in \R.
\end{equation}

 Then $u\not = v$ and for all $T>0$
 \begin{equation}\label{u-v}
  u-v \in L^\infty([-T,T];\z_{9-,4}).
 \end{equation}
\end{theorem}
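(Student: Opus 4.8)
\emph{Sketch of the argument.} That $u\neq v$ is immediate: $u\equiv v$ would force $\phi=u(\cdot,0)=v(\cdot,0)=\va$, contrary to hypothesis, and combining uniqueness (Theorem A) with the reflection symmetry $u(t,x)\mapsto u(-t,-x)$ of \eqref{b} one even gets $u(\cdot,t)\neq v(\cdot,t)$ for every $t$. The content is \eqref{u-v}, which I would obtain by studying $w:=u-v$. The key structural point is that $w$ is \emph{not} a solution of \eqref{b} (so Theorems B--D do not apply to it and there is no conflict with \eqref{u-v}); it solves the linear equation
\begin{equation*}
\p_t w+\h\p_x^2 w+\p_x^3 w+\tfrac12\,\p_x((u+v)w)=0,\qquad w(\cdot,0)=\phi-\va ,
\end{equation*}
whose coefficient $u+v$ lies, by the global $H^{s}$ theory (here $\phi,\va\in H^{9-}$), in $C([-T,T];H^{9-})$, and, by Theorem A(i), in $C([-T,T];\z_{9-,5/2-})$.

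\emph{Step 1 (low-frequency identities).} I would first turn \eqref{norma}, \eqref{v}, \eqref{v1} into
\begin{equation*}
\widehat w(0,t)=0\qquad\text{and}\qquad \p_\xi\widehat w(0,t)=0\qquad(t\in[-T,T]).
\end{equation*}
On the Fourier side \eqref{b} reads $\p_t\widehat u=i\,\omega(\xi)\widehat u-\tfrac i2\,\xi\,\widehat{u^{2}}$ with $\omega(\xi)=\xi^{3}-\xi|\xi|$; since $\omega(0)=\omega'(0)=0$, evaluating at $\xi=0$ gives $\p_t\widehat u(0,t)=0$ (conservation of $I_1$), and one more $\xi$-derivative gives $\p_t\p_\xi\widehat u(0,t)=-\tfrac i2\|u(t)\|^{2}=-\tfrac i2\|\phi\|^{2}$ (conservation of $I_2$), so $\p_\xi\widehat u(0,t)=\p_\xi\widehat\phi(0)-\tfrac{it}{2}\|\phi\|^{2}$; likewise for $v$ and $\va$. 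Subtracting, \eqref{v} yields the first identity and \eqref{v1} together with \eqref{norma} the second.

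\emph{Step 2 (weighted energy estimate).} For the $H^{9-}$ part of the $\z_{9-,4}$ norm nothing new is needed: $\|w\|_{C([-T,T];H^{9-})}\les\|\phi-\va\|_{H^{9-}}$ is the continuous dependence of the $H^{s}$ theory. The work is to propagate $\|\lanx^{4}w\|$. Since $\h\p_x^{2}+\p_x^{3}$ is skew-adjoint, a real energy computation gives
\begin{equation*}
\tfrac{d}{dt}\|\lanx^{4}w\|^{2}=-2\langle[\lanx^{4},\h\p_x^{2}+\p_x^{3}]w,\lanx^{4}w\rangle-\langle\lanx^{4}\p_x((u+v)w),\lanx^{4}w\rangle .
\end{equation*}
In the last term I would integrate $\p_x$ by parts onto $u+v$ and onto the weight, which bounds it by $(\|\p_x(u+v)\|_{L^{\infty}}+\|u+v\|_{L^{\infty}})\|\lanx^{4}w\|^{2}\les\|u+v\|_{H^{9-}}\|\lanx^{4}w\|^{2}$. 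The commutators are where the mild non-smoothness of $\omega$ at the origin enters: $\omega''$ jumps, $\omega'''=6-4\de(\xi)$ and $\omega^{(4)}=-4\de'(\xi)$, so (writing $\lanx^{4}$ as $(1-\p_\xi^{2})^{2}$ on the Fourier side) the commutator $[\lanx^{4},\h\p_x^{2}]w$ generates the distributions $\widehat w(0,t)\,\de(\xi)$ and $\p_\xi\widehat w(0,t)\,\de'(\xi)$, which vanish by Step 1, whereas $[\lanx^{4},\p_x^{3}]w$ is smooth; the leftover pieces are mixed norms $\lanx^{j}\p_x^{k}w$ with $j\le 3$ and $j+k\le 5$, controlled by $\|w\|_{\z_{9-,4}}$ through the weighted Sobolev interpolation inequality, whose relevant threshold here is exactly $s\ge 2r$ (we have $9->8$). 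Thus $\tfrac{d}{dt}\|\lanx^{4}w\|^{2}\les C(T)\|w\|_{\z_{9-,4}}^{2}$.

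\emph{Conclusion and main difficulty.} Combining Steps 1 and 2, $\tfrac{d}{dt}\|w(t)\|_{\z_{9-,4}}^{2}\les C(T)\|w(t)\|_{\z_{9-,4}}^{2}$, and Gronwall with $w(0)=\phi-\va\in\z_{9-,4}$ gives \eqref{u-v} (as usual one first carries out the estimate on regularized data that still satisfy \eqref{norma}, \eqref{v}, \eqref{v1}, and then passes to the limit). The main obstacle is precisely the bookkeeping at $\xi=0$: one must check that, in every term produced by $\lanx^{4}$ against the nonlocal operator $\h\p_x^{2}$, the only non-integrable ($\de$- and $\de'$-type) contributions are multiples of $\widehat w(0,t)$ and $\p_\xi\widehat w(0,t)$ --- which is exactly where hypotheses \eqref{norma}, \eqref{v} and \eqref{v1} are used and shows that none of them can be dropped.
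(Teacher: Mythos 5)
Your argument is correct in outline, but it is a genuinely different route from the paper's proof of this theorem. The paper works with the Duhamel formula $w(t)=U(t)\sigma-\int_0^t U(t-\tau)z(\tau)\,d\tau$ and estimates $\|x^4U(t)f\|$ directly, by computing $\p_\xi^4\big(e^{it(\xi^3-\xi|\xi|)}\hat f\big)$ explicitly: the Dirac terms ($\delta$ from $\p_\xi^3(\xi|\xi|)$, $\delta'$ from $\p_\xi^4$) are killed by $\hat\sigma(0)=0$, $\p_\xi\hat\sigma(0)=0$ (hypotheses \eqref{v}, \eqref{v1}) for the free part and by $\hat z(\tau,0)=0$, $\p_\xi\hat z(\tau,0)=0$ (conservation of $I_2$ plus \eqref{norma}) for the nonlinear part; the remaining twenty-eight terms are bounded via Proposition \ref{jota}, at the price of needing weighted persistence of $u,v$ themselves (the constant $N$ in $\z_{s,2+\epsilon}$ from Theorem A). You instead run a weighted energy estimate on the difference equation, using skew-adjointness of $\h\p_x^2+\p_x^3$, the commutator $[\lanx^4,\h\p_x^2+\p_x^3]$, the low-frequency identities $\hat w(0,t)=\p_\xi\hat w(0,t)=0$, interpolation and Gronwall; this is essentially the method the paper itself uses for Theorems \ref{low4} and \ref{low4theta} (following \cite{flp1}), and it buys you something real: for the nonlinear term you only need $\|u+v\|_{L^\infty}$ and $\|\p_x(u+v)\|_{L^\infty}$, not weighted norms of $u,v$. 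Your Step 1 is exactly the paper's identities \eqref{consw}--\eqref{consxw}, correctly derived. Two caveats on rigor: the regularization you need is not of the data but of the weight --- a priori $\|\lanx^4w(t)\|$ could be infinite, so you must run the estimate with the truncated weights $\lanxN$ and pass to the limit, as in the proof of Theorem \ref{low4theta}; and with truncated weights the Fourier-side $\delta$, $\delta'$ bookkeeping is replaced by the commutation identities for $x^k\h\p_x^2$ (e.g. $x^3\h\p_x^2w=\h\p_x^2(x^3w)-6\h\p_x(x^2w)-2\h(xw)$, valid because $\int w=\int xw=0$), which is where \eqref{norma}, \eqref{v}, \eqref{v1} enter --- the same place your formal $\delta$-analysis puts them, so the sketch survives this translation.
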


The above theorem tell us that uniqueness results established in \cite{urrea} do not extend to any solutions $u\neq 0$ and $v\neq 0$ of the Benjamin equation. 

In what follows, by assuming higher-order decay in initial data, we improve the last theorem.
%\vspace*{5cm}
\begin{theorem}\label{non}
Let $\theta\in (0,1/2)$ and $u,v$ solutions of the IVP \eqref{b} with initial data $\phi$ and $\varphi$, respectively. Suppose that $\phi, \va \in \z_{9+2\theta,4+\theta}$ satisfies $\phi\neq \varphi$, \eqref{norma}, \eqref{v} and \eqref{v1}, above.
%satisfying 
%\begin{equation}\label{inicial}
%u(0)=\phi, v(0)=\varphi, \phi\not =\varphi,
%\end{equation} 
%\begin{equation}\label{norma}
%\|\phi\|=\|\va\|,
%\end{equation}
%\begin{equation}\label{v}
%\int \phi(x)dx=\int \va(x)dx=0, %\quad \mathrm{a.e} \quad y\in \R
%\end{equation}
%and
%\begin{equation}\label{v1}
%\int x\phi(x)dx=\int x\va(x)dx. %\quad \mathrm{a.e} \quad y\in \R.
%\end{equation}

 Then $u\not = v$ and for all $T>0$
 \begin{equation}\label{u-v}
  u-v \in L^\infty([-T,T];\z_{9+2\theta,4+\theta}).
 \end{equation}
\end{theorem}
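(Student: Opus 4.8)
The plan is to follow the same strategy used for Theorem~\ref{non0}, namely to set $w = u - v$ and track the persistence of $w$ in the weighted space $\z_{9+2\theta,4+\theta}$ by exploiting the conservation laws together with the hypotheses \eqref{norma}, \eqref{v}, \eqref{v1}. First I would recall that $w$ solves the difference equation
\begin{equation*}
w_t + \h \p_x^2 w + \p_x^3 w + \tfrac12 \p_x\big((u+v)w\big) = 0,
\end{equation*}
so $w$ satisfies an inhomogeneous linear Benjamin-type equation with a forcing term that is quadratic in $(u,v)$ and linear in $w$. The key point, exactly as in the proof of Theorem~\ref{non0}, is that the hypotheses guarantee that the low-frequency obstructions to weighted persistence vanish: \eqref{v} forces $\widehat{w}(0,0)=0$, and combined with \eqref{v1} it also forces $\p_\xi \widehat{w}(0,0)=0$ at $t=0$, while \eqref{norma} together with conservation of $I_2$ controls the growth of the $L^2$ norm of $w$ relative to $u,v$. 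One then runs the weighted energy estimate for $\lanx^{4+\theta} w$ (equivalently, estimates $\||\xi|^{4+\theta}\widehat w\|_{L^2}$ via $D_\xi^{4+\theta}$), using the $H^{9+2\theta}$-persistence of $u$ and $v$ themselves (which follows from Theorem~A once $\phi,\va\in\z_{9+2\theta,4+\theta}$ with $4+\theta \le (9+2\theta)/2$, since $9+2\theta - 2(4+\theta) = 1 > 0$) to absorb the nonlinear terms, and concludes that $w \in L^\infty([-T,T];\z_{9+2\theta,4+\theta})$. Finally, $u\neq v$ (that is, $w\not\equiv 0$) is immediate from $\phi\neq\va$ at $t=0$ by uniqueness of the solution.

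The structural reason the result improves Theorem~\ref{non0} is that the fractional index $\theta\in(0,1/2)$ lets us push the weight past the integer threshold $r=4$ without hitting the next integer obstruction at $r=9/2$. Concretely, the commutator/Leibniz-type computations for $D_\xi^{4+\theta}$ acting on $\e$ and on products produce, after differentiating the Duhamel formula, terms involving $\p_\xi^k(\xi^3)$ and $\p_\xi^k(\xi|\xi|)$ for $k\le 4$, whose worst contribution is the factor $t\,\xi^{2}$ times lower-order weights; the additional $\theta$ regularity in $H^{9+2\theta}$ is exactly what is needed to close the estimate for the term $D_\xi^{4+\theta}$ hitting the nonlinearity, where one loses up to four derivatives in $x$ from the three $\p_x$'s in $\p_x^3$ plus the Leibniz bookkeeping. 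I would therefore organize the proof as: (1) the difference equation and its Duhamel representation; (2) verification that \eqref{v}, \eqref{v1} kill the zeroth and first $\xi$-Taylor coefficients of $\widehat w$, so that $D_\xi^{4+\theta}\widehat w(t)$ stays in $L^2$ near $\xi=0$; (3) the weighted a priori estimate, splitting into the linear propagator contribution and the Duhamel (nonlinear) contribution, with the latter controlled by Gronwall using $\sup_{[-T,T]}\|u(t)\|_{H^{9+2\theta}},\|v(t)\|_{H^{9+2\theta}}<\infty$; (4) conclusion that $w\in L^\infty([-T,T];\z_{9+2\theta,4+\theta})$ and $w\not\equiv0$.

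The main obstacle I anticipate is step (3), specifically controlling the commutator of the fractional weight $D_\xi^{4+\theta}$ with the dispersive phase $\e$. Unlike the pure KdV phase $\mu\xi^3$, the Benjamin phase contains the term $-\nu\xi|\xi|$, whose second derivative is $-2\nu\,\s$ — bounded but discontinuous at $\xi=0$ — so the fractional derivative $D_\xi^\theta$ of $\s$ is not in $L^2$ near the origin; this is precisely why one needs $\widehat w$ (and its $\xi$-derivatives up to appropriate order) to vanish at $\xi=0$, and why the hypotheses \eqref{v}, \eqref{v1} are indispensable rather than merely convenient. Handling this requires a careful pointwise/Stein-type estimate for $D_\xi^\theta\big(\s \,g(\xi)\big)$ when $g$ vanishes to sufficient order at $0$, of the kind already developed in \cite{flp1} and \cite{urrea}; I would invoke the interpolation lemma for $D_\xi^\theta$ (the analogue of the Fonseca--Ponce calculus lemma) to reduce everything to $L^2$ bounds on $\lanx^{4} w$, $\lanx^{4}\p_x w$-type quantities already under control, plus the genuinely new $\theta$-fractional piece, which is where the extra regularity $2\theta$ in the Sobolev index is spent. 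Once that commutator estimate is in hand, the remaining nonlinear terms are handled by the same product rules and Sobolev embeddings as in Theorem~\ref{non0}, and Gronwall closes the argument.
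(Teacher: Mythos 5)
Your overall skeleton does match the paper's: Duhamel formula for $w=u-v$, writing the weight as $D_\xi^\theta\partial_\xi^4$ on the Fourier side (note, incidentally, that $|x|^{4+\theta}w$ corresponds to $D_\xi^\theta\partial_\xi^4\widehat w$, not to $\||\xi|^{4+\theta}\widehat w\|$ as written), using \eqref{v}, \eqref{v1} to kill the Dirac-delta terms produced by differentiating the phase $e^{it(\xi^3-\xi|\xi|)}$, using \eqref{norma} together with the conservation laws to get $\hat z(\tau,0)=\partial_\xi\hat z(\tau,0)=0$ for the Duhamel term, and Stein-derivative estimates for $D_\xi^\theta$ of the phase (Lemma \ref{DF}) plus the $|x|^{2\theta}$-weighted boundedness of $\mathcal H$ (Lemma \ref{boundhilbert}) for the surviving $\mathrm{sgn}$ terms.

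The genuine gap is in your step (3): you propose to close the nonlinear estimate by Gronwall using only $\sup_{[-T,T]}\|u\|_{H^{9+2\theta}},\|v\|_{H^{9+2\theta}}$, and this does not close. The weighted Duhamel estimate produces the term $\||x|^{4+\theta}\,\partial_x w\,(u+v)\|$, and there is no way to bound $\||x|^{4+\theta}\partial_x w\|$ by the Gronwall quantity $\||x|^{4+\theta}w\|$ plus Sobolev norms: the interpolation of Proposition \ref{jota} would require decay of order $(4+\theta)\delta/(\delta-1)>4+\theta$ on $w$, which is not available; worse, weighted norms of $w$ of order between $5/2$ and $4+\theta$ are not even known to be finite a priori, since $u$ and $v$ separately persist only in $\mathcal Z_{9+2\theta,r}$ with $r<5/2$ (Theorem A), so the quantity you want to run Gronwall on is not known finite to begin with. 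The paper's proof supplies the missing ingredient by a bootstrap: Theorem \ref{non0} is invoked first to get $\sup_{[-T,T]}\|x^4 w(t)\|<\infty$, which is built into the constant $M$; then in the nonlinearity at most the integer weight $x^3$ or $x^4$ is placed on $w$ or $\partial_x w$, while the fractional excess $|x|^{\theta}$ or $|x|^{1+\theta}$ is placed on $u+v$, controlled via Sobolev embedding and Proposition \ref{jota} because $u,v\in \mathcal Z_{9+2\theta,2+\theta}$ with $2+\theta<5/2$. This is the second (and equally essential) place where $\theta<1/2$ enters, which your sketch attributes only to the $\mathrm{sgn}$ singularity. With this bootstrap the estimate closes directly from the Duhamel formula, with no Gronwall at all; if you insist on an energy/Gronwall route with truncated weights $\langle x\rangle_N$, that is essentially the paper's proof of Theorem \ref{low4theta}, and it too first requires the integer-weight result (Theorem \ref{low4}) as input, so the bootstrap step cannot be dispensed with.
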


For the next two results, we show a version of the Theorems \ref{non0} and \ref{non} for initial data with more low regularity. 

\begin{theorem}\label{low4}

Let $u$ and $v$ solutions of the IVP \eqref{b} with initial data $\phi$ and $\varphi$, respectively. Suppose that $\phi, \va \in \z_{8,4}$ satisfies $\phi\neq \varphi$, \eqref{norma}, \eqref{v} and \eqref{v1}, above.

 Then $u\not = v$ and for all $T>0$
 \begin{equation}\label{u-v}
  u-v \in L^\infty([-T,T];\z_{8,4}).
 \end{equation}
\end{theorem}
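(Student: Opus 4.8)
The plan is to work with the difference $w:=u-v$, which solves
\begin{equation*}
w_t+\h\p_x^2 w+\p_x^3 w+\tfrac12\p_x\big((u+v)w\big)=0,\qquad w(\cdot,0)=\phi-\va .
\end{equation*}
Since $\phi\neq\va$ we have $w(\cdot,0)\neq 0$, hence $u\neq v$ at once; the real content of the statement is the persistence \eqref{u-v}, i.e.\ that the $\z_{8,4}$--regularity of $w(\cdot,0)$ is propagated by the difference equation even though $u$ and $v$ individually need not remain in $\z_{8,4}$. I would obtain \eqref{u-v} from an a priori weighted energy inequality for $w$ closed by Gronwall's lemma. Because the weight exponent here is the borderline one, $r=4=s/2$, the estimate only closes once one knows that $w(\cdot,t)$ has vanishing zeroth and first moments for every $t$; producing those conserved moments is the first task, and it is exactly here that hypotheses \eqref{norma}, \eqref{v} and \eqref{v1} enter.

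First I would show that $\int_{\R} w(x,t)\,dx=0$ and $\int_{\R} x\,w(x,t)\,dx=0$ for all $t$. Integrating the difference equation in $x$ and using that every term is an exact $x$-derivative (in particular $\h\p_x^2 w=\p_x(\h\p_x w)$), one gets $\frac{d}{dt}\int w\,dx=0$, so \eqref{v} yields the first identity. Multiplying the equation by $x$ and integrating by parts, $\frac{d}{dt}\int x\,w\,dx=\int\h\p_x w\,dx+\int\p_x^2 w\,dx+\tfrac12\int(u+v)w\,dx=\tfrac12\int(u+v)w\,dx$, the first two integrals vanishing because $\h\p_x w=\p_x(\h w)$ and $\p_x^2 w$ are again exact derivatives of $L^2$--decaying functions. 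By the conservation law \eqref{norm} and hypothesis \eqref{norma}, $\int(u+v)w\,dx=\|u(t)\|^2-\|v(t)\|^2=\|\phi\|^2-\|\va\|^2=0$, whence $\frac{d}{dt}\int x\,w\,dx=0$ and \eqref{v1} gives the second identity. (All these manipulations are made rigorous by first integrating against the truncated weights $\lanxN$ and then letting $N\to\infty$, using that $u,v\in C(\R;H^8)$ by the global well-posedness for \eqref{b}.)

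Next come the energy estimates. The $H^8$ bound is routine: apply $\p_x^k$, $0\le k\le 8$, pair with $\p_x^k w$, use the skew-adjointness of $\h\p_x^2+\p_x^3$ and standard commutator estimates for $[\p_x^k,(u+v)]$, with $\|u+v\|_{H^8}$ controlled on any $[-T,T]$ by global well-posedness, and conclude by Gronwall. For the weighted part I would pair the difference equation with $\lanxN^{8}w$ and integrate. The local terms $\p_x^3 w$ and $\p_x((u+v)w)$ produce, after integrations by parts, quantities bounded by $\big(1+\|u+v\|_{W^{1,\infty}}\big)\|\lanx^{4}w\|^2$ together with lower, fractional-weight terms which are absorbed back into $\|w(t)\|_{\z_{8,4}}^2$ by Nahas--Ponce type interpolation inequalities in weighted spaces. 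The nonlocal contribution $\int\lanxN^{8}w\,\h\p_x^2 w\,dx$ is the crux: using the skew-adjointness of $\h$ one rewrites twice its value as $\int w\,[\lanxN^{8},\h\p_x^2]w\,dx$, a commutator of one lower order, and estimates it via the $L^2$--boundedness of $\h$ and weighted bounds of the form $\||x|^{\alpha}\h f\|\les\||x|^{\alpha}f\|+\|f\|$; the low-frequency corrections that obstruct these bounds for $\alpha$ near $4$ involve precisely $\hat w(0,t)$ and $\p_\xi\hat w(0,t)$, which vanish by the previous step (the two smoothing derivatives in $\p_x^2$ supplying the remaining room). Collecting terms yields $\frac{d}{dt}\|w(t)\|_{\z_{8,4}}^2\les\big(1+\|u+v\|_{H^8}\big)\|w(t)\|_{\z_{8,4}}^2$, and since $\|w(0)\|_{\z_{8,4}}=\|\phi-\va\|_{\z_{8,4}}<\infty$, Gronwall's lemma gives $w\in L^\infty([0,T];\z_{8,4})$; the time reversibility of \eqref{b} extends this to $[-T,T]$, which is \eqref{u-v}.

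The main obstacle is exactly this weighted estimate for the Hilbert-transform term at the endpoint $r=4=s/2$: in Theorems \ref{non0} and \ref{non} there is room to spare ($s=9-$, resp.\ $s=9+2\theta$, against the required $2r$), whereas here $s=2r$, so one must use the sharp form of the weighted commutator/interpolation estimates for $\h$ and keep exact track of the low-frequency corrections — which is why the two conserved moments of $w$ are indispensable rather than merely convenient. A secondary, purely technical point is the careful justification, via truncated weights and an approximation/limiting argument, of all the integrations by parts used above.
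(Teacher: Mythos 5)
Your plan is essentially the paper's own approach: the paper proves Theorem \ref{low4} only by the remark ``follows similar ideas contained in \cite{flp1}'', and the detailed template it has in mind is exactly what you describe and what appears in the proof of Theorem \ref{low4theta} — derive $\int w\,dx=\int xw\,dx=0$ from \eqref{norma}--\eqref{v1} and the conservation laws, then run a weighted energy estimate with the truncated weights $\lanxN$, handling the Hilbert-transform term via commutators and Lemma \ref{boundhilbert} (the moment conditions killing the low-frequency corrections), and close with Gronwall. Your proposal is correct and matches this route.
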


\begin{theorem}\label{low4theta}

Let $\theta\in (0,1/2)$ and $u,v$ solutions of the IVP \eqref{b} with initial data $\phi$ and $\varphi$, respectively. Suppose that $\phi, \va \in \z_{8+2\theta,4+\theta}$ satisfies $\phi\neq \varphi$, \eqref{norma}, \eqref{v} and \eqref{v1}, above.

 Then $u\not = v$ and for all $T>0$
 \begin{equation}\label{u-v}
  u-v \in L^\infty([-T,T];\z_{8+2\theta,4+\theta}).
 \end{equation}

\end{theorem}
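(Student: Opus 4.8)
I would follow the pattern of Theorems \ref{non0}--\ref{low4}, inserting at each step the fractional--weight estimates; here is the plan.

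Set $w:=u-v$; subtracting the two instances of \eqref{b},
\begin{equation}\label{wdiff}
w_{t}+\h\p_x^2w+\p_x^3w+\tfrac12\p_x\big((u+v)w\big)=0 ,\qquad w(\cdot,0)=\phi-\va .
\end{equation}
That $u\not\equiv v$ is immediate, since $u(\cdot,0)=\phi\neq\va=v(\cdot,0)$; so only \eqref{u-v} has content. Because $\phi,\va\in\z_{8+2\theta,4+\theta}$, Theorem A(i) applies with $s=8+2\theta$ and a fixed $r\in(2+\theta/2,5/2)$ (admissible: $r<5/2\le s/2$, and the interval is nonempty since $\theta<1$), giving $u,v\in C([-T,T];\z_{8+2\theta,r})$ — for $t<0$ use the reversibility $u(t,x)\mapsto u(-t,-x)$ of \eqref{b} — while global well-posedness in $H^{8+2\theta}$ gives $C_{0}(T):=\sup_{|t|\le T}(\|u(t)\|_{H^{8+2\theta}}+\|v(t)\|_{H^{8+2\theta}})<\infty$. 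These facts justify the computations below; to be fully rigorous one runs the weighted estimates with the truncated weights $\lanxN^{4+\theta}$, obtains bounds independent of $N$, and lets $N\to\infty$, as in the proofs of Theorems \ref{non0}--\ref{low4}.

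The first step turns the three hypotheses into a cancellation at the origin. By \eqref{v} and \eqref{v1},
\[
\widehat{w_0}(0)=\int w_0\,dx=0 ,\qquad \p_\xi\widehat{w_0}(0)=-i\int x\,w_0\,dx=0 ,
\]
and, writing $F:=\tfrac12\p_x((u+v)w)=uu_x-vv_x$ for the nonlinearity, $\widehat{F(\tau)}(0)=\int F(\tau)\,dx=0$ trivially, while integrating by parts and using conservation of $I_{2}$ (see \eqref{norm}) together with \eqref{norma},
\[
\p_\xi\widehat{F(\tau)}(0)=-i\int x\,F(\tau)\,dx=\tfrac{i}{2}\int(u^{2}-v^{2})(\tau)\,dx=\tfrac{i}{2}\big(\|\phi\|^{2}-\|\va\|^{2}\big)=0 .
\]
In particular $I_{1}(w)$ and $\int x\,w\,dx$ are conserved, so $\widehat{w(t)}(0)=\p_\xi\widehat{w(t)}(0)=0$ for all $|t|\le T$; i.e.\ $\widehat{w(t)}$ vanishes to second order at $\xi=0$.

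The core is a weighted estimate on the Duhamel formula
\[
\widehat{w(t)}(\xi)=\e\,\widehat{w_0}(\xi)-\int_{0}^{t}e^{i(t-\tau)(-\nu\xi|\xi|+\mu\xi^{3})}\,\widehat{F(\tau)}(\xi)\,d\tau .
\]
Splitting $|x|^{4+\theta}=|x|^{\theta}x^{4}$ and using the Nahas--Ponce characterisation of $L^{2}(|x|^{2(4+\theta)}dx)$ via $\Dt$, it suffices to bound $\|\D\p_\xi^{4}\widehat{w(t)}\|_{L^{2}}$ (the remaining parts of $\|w(t)\|_{\z_{8+2\theta,4+\theta}}$ being $\|w(t)\|_{H^{8+2\theta}}\le C_{0}(T)$ or lower-weight terms handled by interpolation). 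In $\p_\xi^{4}(\e\,\widehat{w_0})$ the only genuinely singular contributions are those with three or four $\xi$-derivatives on the phase — which is merely $C^{1,1}$ at $\xi=0$ — and these come multiplied by $\widehat{w_0}(0)$ or $\p_\xi\widehat{w_0}(0)$, hence vanish by the previous step; after that, the worst behaviour near the origin is of the form $t\,|\xi|^{4}\mathrm{sgn}(\xi)$, whose $\D\p_\xi^{4}$ is $O(t\,|\xi|^{-\theta})$, square-integrable because $\theta<1/2$. With the fractional Leibniz rule for $\D$ and the calculus lemmas for $\D$ on functions with a jump (as in \cite{fp,flp1,urrea}) one gets $\|\D\p_\xi^{4}(\e\,\widehat{w_0})\|_{L^{2}}\les P(|t|)\,\|w_0\|_{\z_{8+2\theta,4+\theta}}$ with $P$ a fixed polynomial. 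The Duhamel piece is treated the same way — using $\widehat{F(\tau)}(0)=\p_\xi\widehat{F(\tau)}(0)=0$ to absorb the non-smoothness of the phase — reducing matters to $\int_{0}^{t}P(|t-\tau|)\,\||x|^{4+\theta}F(\tau)\|_{L^{2}}\,d\tau$, where $|x|^{4+\theta}F=\tfrac12|x|^{4+\theta}((\p_x(u+v))w+(u+v)\p_x w)$. The summand $|x|^{4+\theta}(\p_x(u+v))w$ is $\le\|\p_x(u+v)\|_{L^{\infty}}\||x|^{4+\theta}w\|_{L^{2}}\les C_{0}(T)\|w\|_{\z_{8+2\theta,4+\theta}}$; the delicate summand $|x|^{4+\theta}(u+v)\p_x w$ I would control by \emph{splitting the weight between the two factors},
\[
\||x|^{4+\theta}(u+v)\,\p_x w\|_{L^{2}}\le\||x|^{2+\theta/2}(u+v)\|_{L^{\infty}}\,\||x|^{2+\theta/2}\p_x w\|_{L^{2}} ,
\]
where $\||x|^{2+\theta/2}(u+v)\|_{L^{\infty}}\le C(T)$ because $u,v\in\z_{8+2\theta,r}$ with $2+\theta/2<r<5/2$ and $g\mapsto|x|^{2+\theta/2}g$ maps $\z_{8+2\theta,r}$ into $H^{1/2+}\subset L^{\infty}$ (a weighted interpolation using all $8+2\theta$ derivatives), while
\[
\||x|^{2+\theta/2}\p_x w\|_{L^{2}}\les\||x|^{4+\theta}w\|_{L^{2}}^{1/2}\,\|w\|_{H^{8+2\theta}}^{1/2}+\|w\|_{H^{8+2\theta}}
\]
is the interpolation inequality at the endpoint parameter $\tfrac12$ — available precisely because $2+\theta/2=\tfrac12(4+\theta)$ and $8+2\theta=2(4+\theta)$, i.e.\ because the regularity index here is the sharp value $s=2r$. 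Hence $\||x|^{4+\theta}F(\tau)\|_{L^{2}}\les C(T)(\|w(\tau)\|_{\z_{8+2\theta,4+\theta}}+1)$, so
\[
\|w(t)\|_{\z_{8+2\theta,4+\theta}}\les P(|t|)\,\|w_0\|_{\z_{8+2\theta,4+\theta}}+C(T)+C(T)\int_{0}^{t}P(|t-\tau|)\,\|w(\tau)\|_{\z_{8+2\theta,4+\theta}}\,d\tau ,
\]
and Gronwall's inequality gives $\sup_{|t|\le T}\|w(t)\|_{\z_{8+2\theta,4+\theta}}<\infty$, which is \eqref{u-v}.

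The step I expect to be the main obstacle is this borderline nonlinear bound at the endpoint $s=2r$: one cannot put the whole weight $|x|^{4+\theta}$ on $\p_x w$, because $|x|^{4+\theta}w\in H^{1}$ does \emph{not} follow from $w\in\z_{8+2\theta,4+\theta}$ (oscillatory examples such as $\widehat{w}(\xi)\sim|\xi|^{-(2(4+\theta)+1)}e^{i\xi^{3}}$ near infinity show this can fail), so the weight has to be distributed between $u+v$ and $w$, the $L^{\infty}$ factor being absorbed by the modest decay of $u+v$ furnished by Theorem A and the $L^{2}$ factor by the interpolation inequality at its extreme exponent. The remaining delicate point is the interplay of the fractional operator $\D$ with the only $C^{1,1}$ (not $C^{2}$) symbol $-\nu\xi|\xi|+\mu\xi^{3}$ near $\xi=0$; this is exactly where $\theta<1/2$ is used, and where the second-order vanishing of $\widehat{w(t)}$ at the origin — itself a consequence of \eqref{norma}, \eqref{v}, \eqref{v1} propagated in time — is indispensable. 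These two refinements are precisely what upgrades Theorem \ref{non} to the present statement.
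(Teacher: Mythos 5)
Your strategy is the Duhamel/group-estimate method of Theorems \ref{non0} and \ref{non}, but at the borderline regularity $s=2r=8+2\theta$ that route loses a derivative, and your reduction of the Duhamel piece to $\int_0^t P(|t-\tau|)\,\||x|^{4+\theta}F(\tau)\|\,d\tau$ hides exactly the terms that fail. When $D_\xi^\theta\p_\xi^4$ hits $\mu(\xi,t-\tau)\hat F(\tau)$, the contributions where all four $\xi$-derivatives fall on the phase produce factors of size $|\xi|^{8}$ (plus $|\xi|^{2\theta}$ from the Stein derivative) multiplying $\hat F$; these are the analogues of the terms $\|\D(\ps\,\xi^8\ha)\|$ in \eqref{dtheta1}, and they force an estimate of $\|J^{2(4+\theta)}F(\tau)\|$ with $F=\tfrac12\p_x\big((u+v)w\big)$. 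That requires $H^{9+2\theta}$ control of $u,v,w$ (cf.\ \eqref{j120}, \eqref{j1201}), which is precisely why Theorem \ref{non} assumes data in $\z_{9+2\theta,4+\theta}$; under the present hypothesis $\phi,\va\in\z_{8+2\theta,4+\theta}$ only $H^{8+2\theta}$ bounds are available, so this term cannot be closed and no amount of redistribution of the weight $|x|^{4+\theta}$ between the two factors of the nonlinearity repairs it. (A secondary, fixable issue: your symmetric split $\||x|^{2+\theta/2}(u+v)\|_{L^\infty}$ needs decay $2+\theta/2\le(1-\beta)\nu$ with $\nu<5/2$ and $\beta(8+2\theta)\ge 1$, which fails for $\theta$ close to $1/2$; an asymmetric split, e.g.\ weight $2$ on $u+v$, avoids this.)

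The paper closes the endpoint case by abandoning Duhamel altogether and running a weighted energy estimate on the difference equation \eqref{bw}, in the spirit of \cite{flp1}: one multiplies by $\lanxN^{2+2\theta}x^6 w$ with the truncated weights $\lanxN$, integrates, treats $\p_x^3$ purely by integration by parts (so no derivative is lost), controls the Hilbert-transform terms by writing $x^3\h\p_x^2 w$ as $\h$ acting on weighted functions — which uses the propagated moment conditions $\int w\,dx=\int xw\,dx=0$ coming from \eqref{norma}--\eqref{v1} — together with the Calder\'on commutator bound and Lemma \ref{boundhilbert} for the weight $\lanxN^\theta$ (here is where $\theta<1/2$ enters), and uses Theorem \ref{low4} ($w\in L^\infty([-T,T];\z_{8+2\theta,4})$) as an input so that all lower-order weighted quantities are a priori finite; Gronwall then gives a bound uniform in $N$, and $N\to\infty$ concludes. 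To make your argument work you would either have to adopt this energy scheme or supply a genuinely new ingredient (e.g.\ smoothing) recovering the lost derivative in the Duhamel term; as written, the proposal does not prove the statement.
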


The next result corresponds to an improvement of the Theorem D, in the sense that, it requires  more higher decay on the initial data $\phi$. 
\begin{theorem}\label{2t}
Let $\theta\in (0,1/2)$. Suppose that $u\in C(\R;\z_{7,7/2-})$ is a solution of the IVP \eqref{b}. If $u(0)=\phi\in \dot{\z}_{9+2\theta,4+\theta}$ and 
\begin{equation*}
\int_{-\infty}^{\infty}x\phi(x)dx\neq 0,
\end{equation*}
then $$u(t^*)\in \z_{9+2\theta,4+\theta},$$ 
where
$$t^*=\frac{-4}{\|\phi\|^2}\int_{-\infty}^{\infty}x\phi(x)dx.$$
\end{theorem}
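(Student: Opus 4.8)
The plan is to follow the strategy underlying Theorem D (from \cite{urrea}), but to propagate one extra order of decay by exploiting the higher-order weighted persistence theory and the structure of the low-frequency part of the solution. Recall that for the Benjamin equation the quantities $I_1(u)=\int u\,dx$ and $I_2(u)=\|u\|_{L^2}^2$ are conserved, and that $\widehat{u}(0,t)=I_1(u)$ is constant in $t$; since $\phi\in\dot{\z}_{9+2\theta,4+\theta}$ we have $\widehat{\phi}(0)=0$, hence $\widehat{u}(\xi,t)\big|_{\xi=0}=0$ for all $t$. The key algebraic identity, which one derives exactly as in the proof of Theorem D, is a formula for $\partial_\xi\widehat{u}(0,t)$: differentiating the Duhamel representation and using $\widehat{u}(0,t)=0$ together with the conservation of $I_2$, one obtains (up to the normalization of $\mathcal H$, $\p_x^2$, $\p_x^3$ used in \eqref{b}) an identity of the schematic form
\begin{equation*}
\p_\xi\widehat{u}(0,t) = \p_\xi\widehat{\phi}(0) + c\,t\,\|\phi\|^2 ,
\end{equation*}
where $c$ is an explicit nonzero constant and $\p_\xi\widehat{\phi}(0) = -i\int x\phi(x)\,dx$. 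The point is that $t^*$ is chosen precisely so that $\p_\xi\widehat{u}(0,t^*)=0$: solving $\p_\xi\widehat{\phi}(0)+c\,t^*\|\phi\|^2=0$ gives $t^* = -\p_\xi\widehat\phi(0)/(c\|\phi\|^2)$, and matching constants yields $t^* = \frac{-4}{\|\phi\|^2}\int x\phi(x)\,dx$ as stated.

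With that identity in hand I would proceed in three steps. First, establish that $u$ stays in the weighted class $\z_{8+2\theta,4+\theta}$ (or whatever order is needed just below $9+2\theta$) on all of $\R$: this is the persistence half of the analysis and should follow from the same weighted energy estimates / commutator arguments used in Theorem A and in the proofs of Theorems \ref{non0}--\ref{low4theta} above, applied to $u$ itself rather than to a difference $u-v$; the hypothesis $\phi\in\dot{\z}_{9+2\theta,4+\theta}$ gives more than enough room. Second, to gain the last half-unit of decay at the special time $t^*$, use the characterization that $f\in\z_{s,r}$ for $r$ in the relevant range is equivalent to $\widehat f\in H^r$ together with the vanishing condition $\widehat f(0)=0$ (and $\p_\xi\widehat f(0)=0$ once $r$ exceeds $3/2$), i.e. the obstruction to membership in the higher weighted space is exactly the (non)vanishing of $\widehat u$ and its first derivative at the origin. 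We already know $\widehat{u}(0,t^*)=0$; the choice of $t^*$ forces $\p_\xi\widehat{u}(0,t^*)=0$ as well; combined with the a priori regularity of $x^{4+\theta}$-type weights obtained from Step 1, a standard interpolation/"pointwise vanishing at the origin upgrades the admissible weight" lemma (of the type used repeatedly in \cite{flp1, urrea}) then yields $u(t^*)\in\z_{9+2\theta,4+\theta}$. Third, collect the constants to confirm the displayed value of $t^*$.

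The main obstacle I anticipate is Step 2: pushing the weight from order $<4+\theta$ up to exactly $4+\theta$ requires carefully tracking which vanishing moments of $\widehat u$ at $\xi=0$ are available and showing they suffice, since $4+\theta$ sits above the threshold $7/2$ where a second-moment (or a delicate logarithmic/Besov) condition could in principle enter — this is precisely the subtlety flagged after Theorem D, namely that two-time uniqueness only works for decay $r>4$. Here the saving grace is that we have a genuine \emph{identity} for $\p_\xi\widehat u(0,t)$ (linear in $t$) rather than just an inequality, so the cancellation at $t=t^*$ is exact and no second moment is needed; making this rigorous means commuting the weight $x^{4+\theta}$ past the nonlinear term $uu_x$ and the nonlocal operators $\mathcal H\p_x^2$, $\p_x^3$ and controlling the resulting commutators in $L^2$, for which the persistence bounds from Step 1 and the hypotheses \eqref{norma}-type control on $\|\phi\|$ are exactly what is required. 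A secondary technical point is justifying the differentiation under the integral/Duhamel sign that produces the identity for $\p_\xi\widehat u(0,t)$, which needs the integrability afforded by the $x^{4+\theta}$ weight and is the reason the theorem is stated at this decay level rather than at the level of Theorem D.
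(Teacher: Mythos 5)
There are two genuine gaps here, and each breaks the argument as written. Your Step 1 asks for persistence of $u$ in $\z_{8+2\theta,4+\theta}$ for \emph{all} times, i.e.\ decay of order $4+\theta>7/2$ on all of $\R$. This is not merely unproven, it is false for the solutions at hand: since $\hat u(0,t)=\hat\phi(0)=0$ is conserved, such global decay would put $u(\cdot,t)\in\dot{\z}_{7,7/2}$ at (more than) three different times, and Theorem C would force $u\equiv 0$, contradicting $\int x\phi\,dx\neq 0$. The persistence actually available, and the one the paper uses, is the one from Theorem A (ii): $u\in C(\R;\z_{9+2\theta,7/2-})$; the decay $4+\theta$ is gained only at the single time $t^*$.

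The second problem is that your mechanism for the gain at $t^*$ is the wrong functional. The first moment obeys $\frac{d}{dt}\int xu\,dx=\frac12\|\phi\|^2$, so $\p_\xi\hat u(0,t)$ vanishes at $t=-2\|\phi\|^{-2}\int x\phi\,dx$, not at the stated $t^*=-4\|\phi\|^{-2}\int x\phi\,dx$; ``matching constants'' cannot reconcile this, and in fact $\p_\xi\hat u(0,t^*)\neq 0$ in general. The true obstruction to $|x|^{4+\theta}u(t)\in L^2$ is not the vanishing of $\hat u$ and $\p_\xi\hat u$ at the origin, but the coefficient of the Dirac delta created when $\p_\xi^4$ falls on the non-smooth symbol in the Duhamel representation (see \eqref{der4} and \eqref{der4t}): the linear part contributes a term proportional to $t\,\p_\xi\hat\phi(0)$, while the nonlinearity enters through the kernel $(t-\tau)$, namely $\int_0^t(t-\tau)\p_\xi\hat\kappa(\tau,0)\,d\tau=\frac{i}{4}t^2\|\phi\|^2$, so the total coefficient is $G(t,0)=-16t\bigl(\int x\phi\,dx+\frac{t}{4}\|\phi\|^2\bigr)$ as in \eqref{G}, and it is this quantity that vanishes exactly at $t^*$. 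Finally, even after this cancellation the substantive work remains: one must show that the remaining terms $D_\xi^\theta A_j$, $j=1,\dots,28$, lie in $L^2$ using only the available decay $7/2-$, which is done via the Stein-derivative bound of Lemma \ref{DF} together with Proposition \ref{intj}, Lemma \ref{boundhilbert} and Proposition \ref{jota}; your appeal to a generic interpolation/``vanishing at the origin upgrades the weight'' lemma does not supply these estimates.
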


%The limiting case $\theta=0$, in the above theorem, gives us a better result in contrast with %Theorem D since we have the minimal regularity $s=8$. 
Note that the Theorem \eqref{2t} implies that we only may expect to have uniqueness results on two different times in $\dot{\z}_{s,r}$, for a decay $r\geq 9/2$.

Our results on uniqueness are inspired in the recent work of Kenig, Ponce and Vega \cite{kpv}. In this paper, the authors use techniques of Complex analysis to establish uniqueness results for the Benjamin Ono equation. 

In the following we present our first main result.

\begin{theorem}\label{uni1}
Let $u$ and $v$ be the solutions of the IVP \ref{b} for $(x,t)\in \R\times [0,T]$ such that
\begin{equation}\label{pontual1}
u,v\in C([0,T];H^s(\R))\cup C^1((0,T);H^{s-3}(\R)), \quad s>7/2.
\end{equation}
If there exist an open set $I\subset \R$ such that 
\begin{equation}\label{interval}
u(x,0)=v(x,0) \quad \mbox{and} \quad \p_t u(x,0)=\p_t v(x,0), \quad \mbox{for all} \quad x\in I,
\end{equation}
then
\begin{equation}
u\equiv v.
\end{equation}
In particular, if $u\equiv 0$ in $I\times \{0\}$ and $\p_t u(x,0)=0$, $\forall x\in I$, then
\begin{equation}
u\equiv 0.
\end{equation}
\end{theorem}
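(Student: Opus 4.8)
The plan is to reduce the statement, by one application of the equation at $t=0$, to a rigidity property of the Hilbert transform that is settled by complex analysis, in the spirit of Kenig--Ponce--Vega \cite{kpv}. I would set $w=u-v$. Since $uu_x-vv_x=\tfrac12\p_x(u^2-v^2)=\tfrac12\p_x\big((u+v)w\big)$, the difference $w$ solves the linear variable-coefficient equation
\[
\p_t w+\h\p_x^2 w+\p_x^3 w+\tfrac12\p_x\big((u+v)w\big)=0 ,
\]
and the hypothesis \eqref{pontual1} with $s>7/2$ gives $w(\cdot,0)\in H^s(\R)\hookrightarrow C^3(\R)$, $u+v\in C([0,T];H^s(\R))$ and $\p_t w(\cdot,0)\in H^{s-3}(\R)\hookrightarrow C(\R)$, so all pointwise statements below are classical.

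Next I would evaluate this equation at $t=0$ and restrict to the open set $I$. By \eqref{interval}, $w(\cdot,0)\equiv0$ on $I$, hence $\p_x^j w(\cdot,0)\equiv0$ on $I$ for $j=1,2,3$ and also $\p_x\big((u+v)w\big)(\cdot,0)=(u+v)_x\,w(\cdot,0)+(u+v)\,\p_x w(\cdot,0)\equiv0$ on $I$, while $\p_t w(\cdot,0)\equiv0$ on $I$ as well. The surviving term is the nonlocal one, so
\[
\h\big(\p_x^2 w(\cdot,0)\big)\equiv0 \quad\text{on }I .
\]
Thus $g:=\p_x^2 w(\cdot,0)\in H^{s-2}(\R)\subset L^2(\R)$ satisfies $g\equiv0$ on $I$ and $\h g\equiv0$ on $I$.

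The key step, and the one place where complex analysis genuinely enters, is the claim that this forces $g\equiv0$. Since $\widehat{\h g}(\xi)=-i\,\s\,\hat g(\xi)$, the function $g+i\h g\in L^2(\R)$ has Fourier transform supported in $[0,\infty)$, hence is the boundary value of a function $G$ holomorphic in the upper half-plane and belonging to the Hardy space $H^2$. A nonzero $H^2$ function cannot vanish on a subset of $\R$ of positive Lebesgue measure, and $g+i\h g$ vanishes on $I$; therefore $G\equiv0$, so $g+i\h g\equiv0$, and since $g$ is real-valued (the solutions are real), $g\equiv0$. Then $\p_x^2 w(\cdot,0)\equiv0$ on $\R$, which together with $w(\cdot,0)\in H^s(\R)$ forces $w(\cdot,0)\equiv0$ (on the Fourier side $\xi^2\hat w(\xi,0)=0$, so $\hat w(\cdot,0)$ is supported at the origin and hence vanishes); that is, $\phi=\va$. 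To close, I would invoke uniqueness of solutions of \eqref{b} in $C([0,T];H^s(\R))$, $s>7/2$ — available from the well-posedness results recalled in the Introduction, or directly via an $L^2$ energy estimate for $w$, in which the skew-adjoint terms $\h\p_x^2 w$ and $\p_x^3 w$ drop out and the transport term is bounded by $\|(u+v)_x\|_{L^\infty_{x,t}}$, giving $\|w(t)\|_{L^2}^2\les e^{ct}\|w(0)\|_{L^2}^2$ — to conclude $u\equiv v$ on $[0,T]$. The final assertion is the special case $v\equiv0$.

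The main obstacle I anticipate is not in any long computation but in two more delicate points: first, the bookkeeping that legitimizes evaluating the equation pointwise at $t=0$ on $I$ and concluding that only the Hilbert-transform term survives there (this uses the Sobolev embeddings above); and second, the Hardy-space uniqueness theorem — equivalently, the fact that an $L^2$ function with spectrum in a half-line has zero set of measure zero — which is the true analytic content inherited from \cite{kpv}.
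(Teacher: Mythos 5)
Your proposal is correct and follows essentially the same route as the paper: set $w=u-v$, use the equation at $t=0$ together with \eqref{interval} to get $\p_x^2 w(\cdot,0)=\h\p_x^2 w(\cdot,0)=0$ on $I$, conclude $\p_x^2 w(\cdot,0)\equiv 0$ hence $w(\cdot,0)\equiv 0$, and finish by uniqueness of the $H^s$ well-posedness theory. The only difference is that where the paper simply cites Lemma \ref{hzero} (Corollary 2.2 of \cite{kpv}), you supply its proof via the Hardy-space/Paley--Wiener argument, which is the same underlying mechanism.
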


Note that the hypotheses of the above theorem are more general than from those in \cite[Theorem 1.1]{kpv}. Here we require conditions only on initial data and the time derivative at the origin.

The next result is an improvement of the above Theorem.

\begin{theorem}\label{uni2}
Let $u$ and $v$ be the solutions of the IVP \ref{b} for $(x,t)\in \R\times [0,T]$ such that
\begin{equation}\label{pontual1}
u,v\in C([0,T];H^s(\R))\cup C^1((0,T);H^{s-3}(\R)), \quad s>7/2.
\end{equation}
If there exists an open set $I\subset \R$, $0\in I$, such that
\begin{equation}\label{indata}
u(x,0)=v(x,0),\quad x\in I,
\end{equation}
and for each $N\in \Z^{+}$
\begin{equation}\label{intcon}
\int_{|x|\leq R}|\p_t u(x,0)-\p_t v(x,0)|^2 dx\leq c_N R^N, \quad \mbox{as} \quad R\downarrow 0,
\end{equation}
then
\begin{equation}
u(x,t)=v(x,t),\quad (x,t)\in \R\times [0,T].
\end{equation}
\end{theorem}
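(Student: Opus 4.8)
The plan is to reduce Theorem \ref{uni2} to Theorem \ref{uni1} by showing that the weak vanishing condition \eqref{intcon} on $\p_t u(\cdot,0)-\p_t v(\cdot,0)$, together with the relation between $\p_t u(\cdot,0)$ and the spatial data coming from the equation \eqref{b}, already forces $\p_t u(x,0)=\p_t v(x,0)$ on a genuine open subset of $I$ (after possibly shrinking $I$ around $0$). Write $w=u-v$. Since $u,v$ solve \eqref{b}, at $t=0$ we have
\begin{equation*}
\p_t w(\cdot,0) = -\h\p_x^2 w(\cdot,0)-\p_x^3 w(\cdot,0) - \tfrac12\p_x\big(u(\cdot,0)^2-v(\cdot,0)^2\big).
\end{equation*}
By \eqref{indata}, $w(\cdot,0)\equiv 0$ on $I$, so the local (differential) terms $\p_x^3 w(\cdot,0)$ and $\p_x(u^2-v^2)(\cdot,0)$ vanish identically on $I$; thus on $I$,
\begin{equation*}
\p_t w(x,0) = -\h\p_x^2 w(x,0).
\end{equation*}
Here is the first key step: $\p_x^2 w(\cdot,0)$ is supported in $\R\setminus I$, and the Hilbert transform of a function supported away from an interval is real-analytic on that interval. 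More precisely, for $x$ in the interior of $I$, $\h\p_x^2 w(x,0)=\tfrac1\pi\,\mathrm{p.v.}\!\int \frac{\p_y^2 w(y,0)}{x-y}\,dy$ with the integrand smooth in $x$ uniformly for $y$ in the support, so $\p_t w(\cdot,0)$ extends holomorphically to a neighborhood of $I$ in $\C$.

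The second key step is to combine holomorphy with the flatness hypothesis \eqref{intcon}. Since $0\in I$ and $\p_t w(\cdot,0)$ is real-analytic near $0$, it has a convergent Taylor expansion there; condition \eqref{intcon}, namely $\int_{|x|\le R}|\p_t w(x,0)|^2\,dx\le c_N R^N$ for every $N$, forces every Taylor coefficient to vanish (if the lowest nonvanishing coefficient had order $k$, the left side would be $\gtrsim R^{2k+1}$, contradicting the bound with $N=2k+2$). Hence $\p_t w(\cdot,0)\equiv 0$ on the connected component of the analyticity domain containing $0$, and in particular on some open interval $I'\subset I$ with $0\in I'$. Then $(u,v)$ satisfy the hypotheses of Theorem \ref{uni1} with the open set $I'$: $u(\cdot,0)=v(\cdot,0)$ on $I'$ and $\p_t u(\cdot,0)=\p_t v(\cdot,0)$ on $I'$. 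Note the regularity hypothesis \eqref{pontual1} is identical to that of Theorem \ref{uni1}, and $s>7/2$ guarantees $w(\cdot,0)\in H^s$ with enough derivatives (in particular $\p_x^2 w(\cdot,0)\in L^2$ with the decay needed to make the p.v. integral and its analytic continuation legitimate). Applying Theorem \ref{uni1} yields $u\equiv v$ on $\R\times[0,T]$.

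The main obstacle I expect is making the analyticity argument fully rigorous at the level of regularity allowed by \eqref{pontual1}: $\p_x^2 w(\cdot,0)$ lies only in $H^{s-2}(\R)$, so it need not be continuous or compactly supported, and one must justify that $\h$ of an $L^2$ (or $H^{s-2}$) function vanishing on $I$ is real-analytic on $I$ and that its Taylor coefficients at $0$ are controlled by the $L^2$ mass near $0$. This is handled by writing, for $x$ near $0$, $\h\p_x^2 w(x,0)=\tfrac1\pi\int_{\R\setminus I}\frac{\p_y^2 w(y,0)}{x-y}\,dy$ (no principal value needed since $x\notin\mathrm{supp}\,\p_x^2 w(\cdot,0)$), differentiating under the integral sign — legitimate because $\frac{1}{(x-y)^{k+1}}$ is bounded in $L^2_y(\R\setminus I)$ uniformly for $x$ in a compact subset of $I$, pairing against $\p_y^2 w(\cdot,0)\in L^2$ — and estimating $|\p_x^k(\h\p_x^2 w)(0,0)|\le C^{k+1}k!\,\|\p_x^2 w(\cdot,0)\|_{L^2}$, which gives a positive radius of convergence. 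A secondary point is that \eqref{intcon} is stated as an asymptotic bound ``as $R\downarrow 0$'' for each $N$ with constant $c_N$ possibly depending on $N$, so the vanishing-of-Taylor-coefficients step must be phrased as: for each fixed $k$, choosing $N=2k+2$ contradicts a nonzero coefficient of order $k$ — which is exactly the induction that kills all coefficients in order.
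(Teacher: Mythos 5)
Your proposal is correct, and its first half coincides with the paper's: both write $w=u-v$, observe that \eqref{indata} kills all the local terms of the equation on $I$ at $t=0$, and arrive at the pointwise identity $\p_t w(x,0)=-\h\p_x^2w(x,0)$ for $x\in I$, so that \eqref{intcon} becomes an infinite-order $L^2$ flatness condition at $0$ for $\h\p_x^2w(\cdot,0)$. The difference is in how you close the argument. The paper at this point simply applies its Lemma \ref{fzero} (Lemma 4.1 in \cite{kpv}) to $f=\p_x^2w(\cdot,0)$ — which vanishes on $I$ and whose Hilbert transform is flat at $0$ — to conclude $\p_x^2w(\cdot,0)\equiv 0$ on all of $\R$, and then finishes exactly as in Theorem \ref{uni1}. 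You instead prove the relevant mechanism by hand: since $\p_x^2w(\cdot,0)\in L^2$ vanishes on $I$, its Hilbert transform is given off the support by an absolutely convergent integral, hence is real-analytic near $0$ with derivative bounds $C^{k+1}k!\,\|\p_x^2w(\cdot,0)\|_{L^2}$, and the flatness hypothesis annihilates every Taylor coefficient, so $\p_t u(\cdot,0)=\p_t v(\cdot,0)$ on an open interval $I'\ni 0$; you then invoke Theorem \ref{uni1} with $I'$. What each route buys: the paper's proof is a two-line citation of the quantitative unique-continuation lemma of \cite{kpv}; yours is self-contained at this step (essentially reproving the local part of that lemma) and only needs the local vanishing of $\h\p_x^2w(\cdot,0)$, delegating the global unique continuation to Lemma \ref{hzero} through Theorem \ref{uni1}. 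Two small points you should make explicit, though they are harmless and shared with the paper: the identity between $\p_t w(\cdot,0)$ and the analytic representative of $-\h\p_x^2w(\cdot,0)$ holds a priori a.e.\ on $I$ and upgrades to everywhere because $\p_t w(\cdot,0)\in H^{s-3}(\R)\hookrightarrow C(\R)$ for $s>7/2$; and $\p_t w(\cdot,0)$ itself is interpreted via the equation (or as a limit from $t>0$), exactly as the paper does when it evaluates \eqref{b} at $t=0$.
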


The rest of this paper is organized as follows. Section \ref{notation} contains some notation and preliminary estimates that will be useful in the proof of our results. In Section \ref{non1} we give the proof of Theorems \ref{non0}--\ref{low4theta}. In Section \ref{2t1} is proved the Theorem \ref{2t}. Finally, in Section \ref{uniqueness1} is presented the proof of Theorems \ref{uni1} and \ref{uni2}.

\section{Notation and Preliminaries}\label{notation}
%\subsection*{Notation}
Let us introduce the notation that it's being used in this paper. We write $a \les b$ if there exists a constant $c$ such that $a\leq c b$; by $a\sim b$ we mean that $a\les b$ and $b\les a$. The notation $a\les_{l} b$ indicate that the constant $c$ depends on parameter $l$. For the usual $L^2(\R)$ norm we will write only $\|\cdot\|$. The Fourier transform of $f$ is defined by
$$\hat{f}(\xi)=\int_{\R}e^{-i\xi x}f(x)dx.$$

Given any $s\in \R$, we denote the following operators, via their Fourier transform as follows

$$\widehat{J^s f}(\xi)=(1+\xi^2)^{s/2}\hat f (\xi) \quad \mbox{and} \quad \widehat{D^s f}(\xi)=|\xi|^s\hat f (\xi).$$

We also define the $L^2$ based Sobolev space $H^s=H^s(\R)$ by
$$\{f\in \mathcal{S}'(\R):\|\langle \xi \rangle^s \hat f\|<\infty\},$$
where $\mathcal{S}'(\R)$ is the space of tempered distributions and $\langle \xi \rangle:=(1+\xi^2)^{1/2}$.

For all $r\in \R$, we setting $L^2_r(\R)$ as being the set of all functions such that

$$\int (1+x^2)^r |f(x)|^2 dx<\infty.$$ 

If $s,r\in \R$ we denote the weighted Sobolev space (as we already mentioned) by
$$\z_{s,r}=H^s(\R)\cap L^2_r(\R),$$
 endowed with the norm
$$\|\cdot\|_{\z_{s,r}}^2=\|\cdot\|_{H^s}^2+\|\cdot\|_{L^2_r}^2.$$
We also write $\dot{\z}_{s,r}=\{f\in \z_{s,r}: \hat f (0)=0\}$.
%If $s\in \R$, we define the Bessel potential by $

For our estimates in weighted spaces, we setting the truncated weights $\langle x\rangle_N$, $N\in \Z^{+}$, which are given by
\begin{eqnarray*}
\langle x\rangle_N:=\left\{\begin{array} {lccc}
\langle x \rangle \ \mathrm{if} \  |x|\leq N,\\
2N \ \mathrm{if} \ |x|\geq 3N,
\end{array} \right.
\end{eqnarray*}
where $\langle x \rangle = (1+x^2)^{1/2}$. Also, we assume that $\langle x\rangle_N$
is smooth and non-decreasing in $|x|$ with $\langle x\rangle_N'(x)\leq 1,$ for any
$x\geq 0$, and there exists a constant $c$ independent of $N$ such that
$|\langle x\rangle_N''(x)|\leq c \partial_x^{2}\langle x \rangle.$

Finally, putting $\mu(\xi,t):=e^{it(\xi^3-\xi|\xi|)}$, follows that the group generated by the linear part of Benjamin equation can be written, via their Fourier transform as $$\widehat{U(t)\phi}(\xi,t)=\mu(\xi,t)\hat \phi(\xi).$$

Thus, the integral equation associated with the IVP \eqref{b} is given by
\begin{equation}\label{inte}
u(t)=U(t)\phi-\int_0^t U(t-\tau)\kappa(\tau)d\tau,
\end{equation} 
where $\kappa(\tau):=\frac{1}{2}\p_x u^2$.
%\subsection*{Preliminaries}

To prove our main results we need of the following preliminaries results. The next proposition will useful in the proof of Theorems \ref{non0}--\ref{2t}. 
\begin{proposition}\label{jota}
Let $\delta,\nu>0$ such that $J^{\delta}f\in L^{2}(\R)$ and
$\langle x\rangle^\nu f%=(1+x^2)^{\nu/2}f
\in L^{2}(\R).$ Then for any
$\beta \in (0,1)$
\begin{equation*}
\|J^{\beta \delta}(\langle x\rangle^{(1-\beta)\nu}f)\|\leq c\|\langle x
\rangle^\nu f\|^{1-\beta}\|J^{\delta}f\|^{\beta}.
\end{equation*}
\end{proposition}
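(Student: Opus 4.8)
The plan is to prove this as an interpolation inequality, using the three-lines / Stein interpolation theorem applied to an analytic family of operators, or more elementarily via a Hadamard three-lines argument on a single analytic function. First I would set up the problem abstractly: for a fixed $f$ with $J^\delta f \in L^2$ and $\langle x\rangle^\nu f \in L^2$, consider the quantity
\begin{equation*}
F(z) = \| J^{z\delta}(\langle x\rangle^{(1-z)\nu} f) \|
\end{equation*}
and show it is (up to harmless modifications) subharmonic / log-convex in the strip $0 \le \operatorname{Re} z \le 1$, with controlled boundary values: on $\operatorname{Re} z = 0$ one gets $\|\langle x\rangle^\nu f\|$ (since $J^{i t\delta}$ is a unitary multiplier, $|\mu(\xi)^{it}| = 1$... wait, more precisely $(1+\xi^2)^{it\delta/2}$ has modulus one), and on $\operatorname{Re} z = 1$ one gets $\|J^\delta f\|$ (since multiplication by $\langle x\rangle^{-i t\nu}$ is an isometry on $L^2$). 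Then the bound at $z = \beta \in (0,1)$ follows from the three-lines lemma, giving exactly the stated inequality with the exponents $1-\beta$ and $\beta$.

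The key steps, in order, are: (1) reduce to the case $f \in \mathcal S(\R)$ by density, so all manipulations are justified and $F(z)$ is finite and continuous on the closed strip; (2) test against a fixed $g \in L^2$ with $\|g\| = 1$ and form the scalar function $\Phi(z) = \langle J^{z\delta}(\langle x\rangle^{(1-z)\nu} f), g\rangle$, which is analytic in the open strip and bounded and continuous on its closure — here one writes $J^{z\delta}$ as a Fourier multiplier by $(1+\xi^2)^{z\delta/2}$ and $\langle x\rangle^{(1-z)\nu}$ as a genuine multiplication operator, and checks the growth in $|\operatorname{Im} z|$ is at most polynomial (so Phragmén–Lindelöf applies, or one inserts a regularizing factor $e^{\epsilon(z^2-z)}$ and lets $\epsilon \downarrow 0$); (3) estimate the boundary values $\sup_t |\Phi(it)| \le \|\langle x\rangle^\nu f\|$ and $\sup_t |\Phi(1+it)| \le \|J^\delta f\|$; (4) apply the three-lines theorem to conclude $|\Phi(\beta)| \le \|\langle x\rangle^\nu f\|^{1-\beta} \|J^\delta f\|^\beta$, then take the supremum over $g$ to recover $F(\beta) = \|J^{\beta\delta}(\langle x\rangle^{(1-\beta)\nu} f)\|$ on the left.

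I expect the main obstacle to be step (2): making the analytic-family argument rigorous, i.e. verifying that $\Phi$ is genuinely holomorphic in the open strip (differentiating under the integral/pairing, which requires dominating $\partial_z$ of the multiplier $(1+\xi^2)^{z\delta/2}\langle x\rangle^{(1-z)\nu}$ uniformly on compact substrips) and that it has at most admissible growth as $|\operatorname{Im} z| \to \infty$ so that the three-lines lemma (Phragmén–Lindelöf version) applies. The factor $\langle x\rangle^{(1-z)\nu}$ in particular grows in $x$ like $\langle x\rangle^{\nu}$ uniformly for $\operatorname{Re} z \in [0,1]$ but oscillates, $\langle x\rangle^{-i\nu \operatorname{Im} z} = e^{-i\nu \operatorname{Im}(z)\log\langle x\rangle}$, so no exponential blow-up occurs; similarly $(1+\xi^2)^{\operatorname{Re}(z)\delta/2}$ is bounded by $\langle\xi\rangle^\delta$ and the imaginary part only contributes a unimodular oscillation. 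Once these uniform bounds are in hand — which is where the hypotheses $J^\delta f, \langle x\rangle^\nu f \in L^2$ are used, together with $f$ Schwartz — the rest is the standard complex-interpolation boilerplate. A fully elementary alternative, avoiding complex analysis, would be to prove it by a real-variable argument: split in frequency $|\xi| \le R$ and $|\xi| > R$ and in space $|x| \le \rho$, $|x| > \rho$, bound $\|J^{\beta\delta}(\langle x\rangle^{(1-\beta)\nu} f)\|$ by a sum controlled by $R^{\beta\delta}\|\langle x\rangle^{(1-\beta)\nu} f\|$ plus a tail, and optimize in $R$; but this loses constants and is messier, so I would present the interpolation proof.
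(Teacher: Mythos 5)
Your overall strategy --- testing against $g\in L^2$, forming $\Phi(z)=\langle J^{z\delta}(\langle x\rangle^{(1-z)\nu}f),g\rangle$ on the strip $0\le\operatorname{Re}z\le1$, and interpolating --- is the natural one, and it is essentially the route behind Lemma 1 of \cite{fp}, which the paper simply cites rather than proves. But as written your argument has a genuine gap on the boundary line $\operatorname{Re}z=1$. There the quantity to control is $\|J^{(1+it)\delta}(\langle x\rangle^{-it\nu}f)\|=\|J^{\delta}(\langle x\rangle^{-it\nu}f)\|$: the unimodular frequency factor $(1+\xi^2)^{it\delta/2}$ drops out harmlessly, but the oscillating weight $\langle x\rangle^{-it\nu}=e^{-it\nu\log\langle x\rangle}$ sits \emph{inside} $J^{\delta}$, so the fact that multiplication by it is an isometry on $L^2$ is beside the point. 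It is not bounded on $H^{\delta}$ uniformly in $t$: already for $\delta=1$, $\partial_x(\langle x\rangle^{-it\nu}f)=\langle x\rangle^{-it\nu}\partial_xf-it\nu\,x\langle x\rangle^{-2}\langle x\rangle^{-it\nu}f$ produces a term of size $|t|\nu\|f\|$, and in general the operator norm grows like $(1+|t|)^{\lceil\delta\rceil}$. Consequently your claimed boundary estimate $\sup_t|\Phi(1+it)|\le\|J^{\delta}f\|\,\|g\|$ is false, that supremum is infinite, and the plain three-lines lemma cannot be applied in the form you set up. (This is also why the statement carries a constant $c$ rather than $1$.) The $\operatorname{Re}z=0$ line is fine exactly as you wrote it, since there the unitary multiplier $J^{it\delta}$ is outermost and the unimodular weight factor is absorbed by the plain $L^2$ norm.

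The repair requires an extra ingredient that your sketch does not contain: first prove the polynomial bound $\|J^{\delta}(\langle x\rangle^{-it\nu}f)\|\le c\,(1+|t|)^{\lceil\delta\rceil}\|J^{\delta}f\|$ (multiplication by a function whose derivatives up to order $\lceil\delta\rceil$ are $O((1+|t|)^{\lceil\delta\rceil})$ is bounded on $H^{\delta}$), and then invoke the Hirschman--Stein version of the three-lines theorem (equivalently, Stein's interpolation for analytic families with admissible growth), in which only Poisson averages of $\log|\Phi|$ along the two boundary lines enter, so polynomially growing boundary data are acceptable and yield the stated inequality with a constant depending on $\beta$, $\delta$, $\nu$. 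Note that your proposed regularizing factor $e^{\epsilon(z^2-z)}$ only tames growth in the interior of the strip; it does not repair unbounded boundary values. With these two modifications your argument closes and coincides with the standard proof of the cited lemma; your alternative real-variable splitting in $x$ and $\xi$ with optimization in $R$ would also need care, since a single-scale optimization typically loses the product structure $\|\langle x\rangle^{\nu}f\|^{1-\beta}\|J^{\delta}f\|^{\beta}$ unless done with a dyadic (almost-orthogonal) decomposition.
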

\begin{proof}
See Lemma 1 in \cite{fp}. 
\end{proof}

\begin{lemma}\label{boundhilbert}
Let $-1/2<\nu<1/2$, then the Hilbert transform $\mathcal{H}$ is a bounded operator in $L^{2}(\omega^\nu dx),$ i.e.
\begin{eqnarray*}%\label{bound}
\|\omega^\nu \mathcal{H}f\| \leq c \|\omega^\nu f \|,
\end{eqnarray*}
where $\omega=|x|$ or $\omega=\lanxN$ and $c$ is a constant independent of $N$.
\end{lemma}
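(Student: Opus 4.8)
The plan is to read the inequality off from the classical theory of Muckenhoupt $A_2$ weights. Since $\|\omega^\nu\mathcal{H}f\|^2=\int_\R\omega^{2\nu}|\mathcal{H}f|^2\,dx$, the asserted estimate is exactly the statement that $\mathcal{H}$ is bounded on the weighted space $L^2(\omega^{2\nu}\,dx)$. By the Hunt--Muckenhoupt--Wheeden theorem, this holds, with operator norm controlled by the $A_2$-characteristic
\[
[w]_{A_2}:=\sup_{I}\Big(\frac{1}{|I|}\int_I w\,dx\Big)\Big(\frac{1}{|I|}\int_I w^{-1}\,dx\Big)
\]
(the supremum over bounded intervals $I\subset\R$), as soon as $w:=\omega^{2\nu}\in A_2(\R)$. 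Thus the entire proof reduces to verifying $\omega^{2\nu}\in A_2(\R)$ for $\omega=|x|$ and $\omega=\langle x\rangle_N$, and, this being the content of the ``constant independent of $N$'', to checking that $[\langle x\rangle_N^{2\nu}]_{A_2}$ is bounded uniformly in $N$.

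For $\omega=|x|$ I would invoke (or quickly reprove) the standard fact that the power weight $|x|^{a}$ belongs to $A_2(\R)$ precisely when $-1<a<1$; with $a=2\nu$ this is exactly the hypothesis $-1/2<\nu<1/2$. The direct check splits intervals $I$ into two families: if $\mathrm{dist}(I,0)\ge|I|$ then $|x|^{2\nu}$ is comparable to a constant on $I$ (oscillation factor $\le 2^{2|\nu|}$), so the product of averages is $O(1)$; if $\mathrm{dist}(I,0)\le|I|$ one rescales so that $|I|\sim1$, reducing to a compact family of configurations sitting inside a fixed bounded interval, on which $\int|x|^{\pm2\nu}\,dx<\infty$ because $|2\nu|<1$. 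The same two-region argument shows $\langle x\rangle^{2\nu}\in A_2(\R)$ as well, the weight being $\sim1$ near the origin and $\sim|x|^{2\nu}$ away from it.

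For $\omega=\langle x\rangle_N$ I would first record, directly from the defining properties of the truncated weight, that $\langle x\rangle_N\sim m_N(x):=\min(\langle x\rangle,2N)$ with comparability constants independent of $N$ (equality holds for $|x|\le N$, and both sides are $\sim N$ for $|x|\ge N$). Since replacing a weight by a comparable one changes $[\,\cdot\,]_{A_2}$ by at most a fixed factor, it suffices to bound $[m_N^{2\nu}]_{A_2}$ uniformly in $N$. When $\nu>0$ one has $m_N^{2\nu}=\min(\langle x\rangle^{2\nu},(2N)^{2\nu})$ and $m_N^{-2\nu}=\max(\langle x\rangle^{-2\nu},(2N)^{-2\nu})$, so writing $A:=\frac1{|I|}\int_I\langle x\rangle^{2\nu}$ and $C:=\frac1{|I|}\int_I\langle x\rangle^{-2\nu}$,
\[
\Big(\frac{1}{|I|}\int_I m_N^{2\nu}\Big)\Big(\frac{1}{|I|}\int_I m_N^{-2\nu}\Big)\le\min\big(A,(2N)^{2\nu}\big)\big(C+(2N)^{-2\nu}\big)\le AC+1\le[\langle x\rangle^{2\nu}]_{A_2}+1,
\]
uniformly in $N$ and $I$. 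The case $\nu<0$ is symmetric (interchange the two factors), and $\nu=0$ is just the $L^2$-boundedness of $\mathcal{H}$. Feeding $\sup_N[\langle x\rangle_N^{2\nu}]_{A_2}<\infty$ into the Hunt--Muckenhoupt--Wheeden bound yields $\|\langle x\rangle_N^\nu\mathcal{H}f\|\le c\,\|\langle x\rangle_N^\nu f\|$ with $c$ depending only on $\nu$.

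The main (indeed the only) obstacle is this uniformity in $N$; everything else is classical. A slicker-looking alternative would be to cite the general stability of Muckenhoupt classes under truncation --- $w\in A_p$ implies $\min(w,k)\in A_p$ with characteristic bounded by $c_p[w]_{A_p}$ uniformly in $k>0$ --- but the two-line estimate above keeps the argument self-contained.
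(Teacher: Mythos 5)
Your proof is correct, and it is essentially the argument the paper itself relies on: the paper gives no proof of Lemma \ref{boundhilbert}, simply citing \cite{fp}, where the estimate is obtained exactly as you do --- via the Hunt--Muckenhoupt--Wheeden theorem after checking that $|x|^{2\nu}$ (hence $\langle x\rangle^{2\nu}$) is an $A_2$ weight for $|\nu|<1/2$, with the $N$-uniformity for $\langle x\rangle_N^{2\nu}$ handled by comparing the truncated weight with $\min(\langle x\rangle,2N)$ as in your two-line estimate. So your write-up is a self-contained version of the same route; no gaps.
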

\begin{proof}
See \cite{fp} and references therein.
\end{proof}

The following result help us to estimate the $L^2$ norm of some terms that will appear in the proof of Theorems \ref{non} and \ref{2t}.

\begin{proposition}\label{intj}
For all $\theta\in(0,1)$ 
\begin{equation}\label{intj1}
\|J^{2\theta}(x^j \p_x^kf)\|\les \|J^{2(\theta+j)+k} f\|+\|\lanx^{\theta+j+\frac{k}{2}}f\|,
\end{equation}
and
\begin{equation}\label{intj2}
\||x|^\theta \p_x^k(x^j f)\|\les \|J^{2(4+\theta)}f\|+\|\lanx^{\frac{2(4+\theta)(j+\theta)}{2(4+\theta)-k}}f\|,
\end{equation}
where $j$ and $k$ are non-negative integers such that $k<2(4+\theta)$.
\end{proposition}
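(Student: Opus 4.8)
The plan is to prove Proposition~\ref{intj} by reducing both inequalities to Proposition~\ref{jota} (the Fonseca--Ponce interpolation lemma) after first distributing the differential and multiplication operators. For \eqref{intj1}, I would begin by noting that $J^{2\theta}$ commutes with $\p_x^k$ up to Fourier multipliers bounded by $\langle\xi\rangle^{2\theta}$, so $\|J^{2\theta}(x^j\p_x^k f)\| \les \|J^{2\theta}\p_x^k(x^j f)\| \les \|\langle D\rangle^{2\theta+k}(x^j f)\|$. Then I would apply Leibniz in the form $\langle\xi\rangle^{2\theta+k}\widehat{x^j f} = \langle\xi\rangle^{2\theta+k}(i\p_\xi)^j\hat f$ and move the $\xi$-derivatives onto the weight, which produces a sum of terms of the shape $x^{j'}\langle D\rangle^{2\theta+k} f$ with $0\le j'\le j$ — the worst being $j'=j$. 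The key step is to estimate $\|x^{j}J^{2\theta+k}f\|$, and here Proposition~\ref{jota} applies with the choices $\delta = 2(\theta+j)+k$, $\nu = \theta+j+\tfrac{k}{2}$, and $\beta$ chosen so that $\beta\delta = 2\theta+k$ and $(1-\beta)\nu = j$; one checks that $\beta = \frac{2\theta+k}{2(\theta+j)+k}\in(0,1)$ and that $(1-\beta)\nu = j$ is exactly consistent with this $\beta$, which is the arithmetic heart of the argument. This yields $\|x^j J^{2\theta+k}f\| \les \|\langle x\rangle^{\theta+j+k/2}f\|^{1-\beta}\|J^{2(\theta+j)+k}f\|^\beta \les \|J^{2(\theta+j)+k}f\| + \|\langle x\rangle^{\theta+j+k/2}f\|$ after Young's inequality, giving \eqref{intj1}.

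For \eqref{intj2}, the strategy is analogous but the bookkeeping is heavier because the derivative $\p_x^k$ now acts on $x^j f$ producing, by the Leibniz rule, a sum $\sum_{i=0}^{\min(j,k)} c_i\, x^{j-i}\p_x^{k-i} f$. Applying $|x|^\theta$ and using $\||x|^\theta g\| \les \|\langle x\rangle^\theta g\|$, it suffices to bound each $\|\langle x\rangle^\theta x^{j-i}\p_x^{k-i} f\| = \|\langle x\rangle^{\theta}|x|^{j-i}\p_x^{k-i}f\|$, and since $\p_x^{k-i}f$ has Fourier transform $(i\xi)^{k-i}\hat f$, this is controlled by $\|\langle x\rangle^{\theta+j-i} J^{k-i} f\|$ (absorbing the polynomial weight $|x|^{j-i}$ into $\langle x\rangle^{j-i}$ after commuting, as in the first part). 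Now I would invoke Proposition~\ref{jota} once more, this time anchoring the high derivative at $\delta = 2(4+\theta)$ — this is where the somewhat mysterious exponent $\frac{2(4+\theta)(j+\theta)}{2(4+\theta)-k}$ comes from. Writing $J^{k-i}(\langle x\rangle^{\theta+j-i} f)$ and interpreting it as $J^{\beta\delta}(\langle x\rangle^{(1-\beta)\nu}f)$ forces $\beta\delta = k-i$, hence $\beta = \frac{k-i}{2(4+\theta)}$, and $(1-\beta)\nu = \theta+j-i$, hence $\nu = \frac{(\theta+j-i)\cdot 2(4+\theta)}{2(4+\theta)-(k-i)}$; the hypothesis $k < 2(4+\theta)$ guarantees $\beta\in(0,1)$ for every relevant $i$, and one checks that $\nu$ is maximized at $i=0$, yielding the stated weight exponent. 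Proposition~\ref{jota} then gives the product bound, and Young's inequality splits it into the sum on the right-hand side of \eqref{intj2}.

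The main obstacle I anticipate is not any single hard estimate but rather verifying the compatibility of the interpolation parameters: for each term arising from the Leibniz expansion one must simultaneously satisfy $\beta\delta = $ (the order of the surviving derivative) and $(1-\beta)\nu = $ (the order of the surviving weight), and then confirm both that $\beta\in(0,1)$ (which is where the constraint $k<2(4+\theta)$ and $\theta\in(0,1)$ enter) and that the resulting weight exponent $\nu$ never exceeds the one claimed in the statement. A secondary technical point is justifying the commutator manipulations — moving $\langle D\rangle^{a}$ past multiplication by $x^j$ and past $\p_x^k$ — which is routine for Schwartz functions and extends by density, but should be stated carefully since $f$ is only assumed to lie in the relevant weighted Sobolev space; one typically handles this by a standard approximation argument, noting all the intermediate quantities are finite precisely under the hypotheses. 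Once the parameter arithmetic is pinned down, each piece follows immediately from Propositions~\ref{jota} and elementary interpolation, so the proof is short modulo this verification.
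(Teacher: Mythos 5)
Your overall scheme is the same as the paper's: expand by the Leibniz rule, reduce each resulting term to Proposition~\ref{jota} (anchored at $\delta=2(4+\theta)$ for \eqref{intj2}), check $\beta\in(0,1)$ using $k<2(4+\theta)$, note that the weight exponent $\frac{2(4+\theta)(\theta+j-i)}{2(4+\theta)-(k-i)}$ is largest at $i=0$, and finish with Young's inequality; your parameter arithmetic (including the consistency check $(1-\beta)\nu=j$ for \eqref{intj1}) agrees with the paper, which handles \eqref{intj2} by the two cases $k\le j$ and $k>j$ and obtains \eqref{intj1} by Plancherel, turning it into the same computation in the $\xi$ variable.

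There are, however, two concrete soft spots in your reductions. First, the opening inequality $\|J^{2\theta}(x^j\p_x^kf)\|\les\|J^{2\theta}\p_x^k(x^jf)\|$ is not a consequence of the fact you cite ($J^{2\theta}$ commuting with $\p_x^k$): the noncommuting pair is $x^j$ and $\p_x^k$, and dropping their commutator in one stroke is precisely the issue; already for $j=k=1$ the claim amounts to $\|J^{2\theta}f\|\les\|J^{2\theta}\p_x(xf)\|$, a weighted Hardy-type inequality rather than a commutation fact. The robust move, which you use yourself for \eqref{intj2} and which the paper uses throughout, is to write $x^j\p_x^kf=\sum_l c_l\,\p_x^{k-l}(x^{j-l}f)$ and estimate every term; all the exponents so produced are dominated by the stated right-hand side. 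Second, you apply Proposition~\ref{jota} to weight-outside quantities, namely $\|x^jJ^{2\theta+k}f\|$ and $\|\lanx^{\theta+j-i}J^{k-i}f\|$, whereas the lemma bounds $\|J^{\beta\delta}(\lanx^{(1-\beta)\nu}f)\|$ with the weight inside. The interchange is not the ``routine for Schwartz functions plus density'' point you defer: for instance, $\|\lanx^{a}\p_x^{m}f\|\les\|\lanx^{a}J^{m}f\|$ asks for $L^2(\lanx^{2a}dx)$-boundedness of the multiplier $\xi^m\langle\xi\rangle^{-m}$, which, like the Hilbert transform in Lemma~\ref{boundhilbert}, is only automatic for $a<1/2$, while here $a=\theta+j-i$ can be large. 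The paper's order of operations avoids this entirely: since $|\p_x^m\lanx^{a}|\les\lanx^{a}$, one has the genuine inequality $\|\lanx^{a}\p_x^{l}f\|\les\|J^{l}(\lanx^{a}f)\|$ (weight inside an integer-order derivative), after which Proposition~\ref{jota} applies verbatim; and for \eqref{intj1} taking Fourier transforms first keeps every derivative of integer order, so no fractional weighted commutator is ever needed. With these two repairs your argument coincides with the paper's.
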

\begin{proof}
First, we will deal with \eqref{intj2}. We divide the proof in two cases.

\textbf{Case} $k\leq j$. 

For all $0\leq l\leq k$, its possible to apply Lemma \ref{jota} to obtain
\begin{equation}\label{desj}
\|J^l (\lanx^{j-k+l+\theta}f)\|\les \|J^{2(4+\theta)}f\|+\|\lanx^{\frac{2(4+\theta)(j-k+l+\theta)}{2(4+\theta)-l}}f\|.
\end{equation}
Thus, by using Leibniz rule for derivatives and \eqref{desj} it seems that
\begin{equation}
\begin{split}
\||x|^\theta \p_x^k(x^j f)\|\les& \sum_{l=0}^{k}\|\lanx^{j-k+l+\theta}\p_x^l f\|\\
%&+\|\lanx^{j-k+2+\theta}\p_x^2 f\|+\cdots +\|\lanx^{j+\theta}\p_x^k f\|\\
\les& \sum_{l=0}^{k} \|J^l(\lanx^{j-k+l+\theta}f)\|\\
%&+\cdots+\|J^k(\lanx^{j+\theta}f)\|.
\les& \sum_{l=0}^{k}(\|J^{2(4+\theta)}f\|+\|\lanx^{\frac{2(4+\theta)(j-k+l+\theta)}{2(4+\theta)-l}}f\|)\\
\les& \|J^{2(4+\theta)}f\|+\|\lanx^{\frac{2(4+\theta)(j+\theta)}{2(4+\theta)-k}}f\|,
\end{split}
\end{equation}

%\begin{equation}
%\begin{split}
%\|J^l (\lanx^{j-k+l+\theta}f)\|\les& \|J^{2(4+\theta)}f\|+\|\lanx^{\frac{2(4+\theta)(j-k+l+\theta)}%{2(4+\theta)-l}}f\|\\
%\les& \|J^{2(4+\theta)}f\|+\|\lanx^{\frac{2(4+\theta)(j+\theta)}{2(4+\theta)-k}}f\|,
%\end{split}
%\end{equation}
where above we used the inequality $$\frac{2(4+\theta)(j-k+l+\theta)}{2(4+\theta)-l}\leq \frac{2(4+\theta)(j+\theta)}{2(4+\theta)-k}.$$

\textbf{Case} $k>j$. 

For any $0\leq l\leq j$, again using Lemma \ref{jota} 

\begin{equation}\label{desj1}
\|J^{k-j}(\lanx^{j-l+\theta}f)\|\les \|J^{2(4+\theta)}f\|+\|\lanx^{\frac{2(4+\theta)(j-l+\theta)}{2(4+\theta)-k+l}}f\|.
\end{equation}
Then, by Leinitz rule and \eqref{desj1} follows that
%\begin{equation}
%\begin{split}
%\||x|^\theta \p_x^k(x^j f)\|\les& \|\lanx^{j+\theta}\p_x^k f\|+\|\lanx^{j-1+\theta}\p_x^{k-1} f\|+%\cdots+ \|\lanx^{1+\theta}\p_x^{k-j-1} f\|+\|\lanx^{\theta}\p_x^{k-j} f\|\\
%\les& \|J^k(\lanx^{j+\theta}f)\|+\|J^{k-1}(\lanx^{j-1+\theta}f)\|+\cdots+\|J^{k-j}(\lanx^{\theta}f)\|.
%\end{split}
%\end{equation}
\begin{equation}
\begin{split}
\||x|^\theta \p_x^k(x^j f)\|\les& \sum_{l=0}^{j}\|\lanx^{j-l+\theta}\p_x^{k-j}f\|\\
\les& \sum_{l=0}^{j} \|J^{k-j}(\lanx^{j-l+\theta}f)\|\\
\les& \sum_{l=0}^{j} (\|J^{2(4+\theta)}f\|+\|\lanx^{\frac{2(4+\theta)(j-l+\theta)}{2(4+\theta)-k+l}}f\|)\\
\les& \|J^{2(4+\theta)}f\|+\|\lanx^{\frac{2(4+\theta)(j+\theta)}{2(4+\theta)-k}}f\|,
\end{split}
\end{equation}

where above its used the inequality

$$\frac{2(4+\theta)(j-l+\theta)}{2(4+\theta)-k+l}\leq \frac{2(4+\theta)(j+\theta)}{2(4+\theta)-k}.$$

With respect to inequality \eqref{intj1}, the Plancherel identity implies that

\begin{equation}
\begin{split}
\|J^{2\theta}(x^j \p_x^kf)\|&=\|\lan^{2\theta}\p_\xi^j (\xi^k \hat f)\|,
\end{split}
\end{equation}
hence, from here we deal by similar way to above.

This finishes the proof. 
\end{proof}

Next, we present a characterization of Sobolev spaces defined as $L^p_s:=(1-\Delta)^{-s/2}L^{p}(\R^n)$.  

\begin{theorem}\label{stein}
	Let $b\in (0,1)$ and $2n/(n+2b)<p<\infty.$ Then $f\in L^{p}_{b}(\R^{n})$ if and only if
	\begin{itemize}
		\item [a)] $f\in L^{p}(\R^{n}),$ 
		\item [b)]
		$\mathcal{D}^{b}f(x)={\displaystyle \left (
			\int_{\R^{n}}\frac{|f(x)-f(y)|^{2}}{|x-y|^{n+2b}}dy\right)^{1/2}} \in
		L^{p}(\R^{n}),$ with,
		\begin{equation}\label{equiv}
		\|f\|_{b,p}\equiv \|(1-\Delta)^{b/2}f\|_{p}=\|J^{b}f\|_{p}\simeq \|f\|_{p}+\|D^{b}f\|_{p}\simeq \|f\|_{p}+\|\mathcal{D}^{b}f\|_{p},
		\end{equation}
		where, for $s\in \R$,
		$D^{s}=(-\Delta)^{s/2}$.
	\end{itemize}
\end{theorem}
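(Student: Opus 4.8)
The plan is as follows. This is a classical characterization theorem of Stein, and in the sequel it is used essentially only with $p=2$; I indicate the proof in the general range. There are two norm equivalences to establish: the \emph{inner} one,
$\|J^{b}f\|_{p}=\|(1-\Delta)^{b/2}f\|_{p}\simeq\|f\|_{p}+\|D^{b}f\|_{p}$,
and the \emph{outer} one,
$\|f\|_{p}+\|D^{b}f\|_{p}\simeq\|f\|_{p}+\|\mathcal D^{b}f\|_{p}$;
together they yield the equivalence of conditions (a) and (b) and of all three quantities in \eqref{equiv}, hence the stated ``if and only if''.

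For the inner equivalence I would use Fourier multiplier theory. Factor $\langle\xi\rangle^{b}=(1+|\xi|^{b})\,m(\xi)$ with $m(\xi)=(1+|\xi|^{2})^{b/2}(1+|\xi|^{b})^{-1}$. One checks that $m$ is bounded, bounded away from $0$, smooth off the origin, and satisfies the H\"ormander--Mikhlin bounds $|\xi|^{|\alpha|}|\p^{\alpha}m(\xi)|\les 1$; the only delicate point is near $\xi=0$, where $\p^{\alpha}|\xi|^{b}=O(|\xi|^{b-|\alpha|})$ and $|\xi|^{b-|\alpha|}\les|\xi|^{-|\alpha|}$ for $|\xi|\le 1$ because $b>0$. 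The same applies to $1/m$ and to $1/(1+|\xi|^{b})$. Hence, for $1<p<\infty$, $m(D)$, $m(D)^{-1}$ and $(I+D^{b})^{-1}$ are all bounded on $L^{p}(\R^{n})$, so $\|J^{b}f\|_{p}\simeq\|(I+D^{b})f\|_{p}\simeq\|f\|_{p}+\|D^{b}f\|_{p}$ by the triangle inequality. In the remaining range $2n/(n+2b)<p\le 1$ one argues instead on the real-variable Hardy space, the kernels of $m(D)$ and $m(D)^{-1}$ being of Calder\'on--Zygmund type and the hypothesis on $p$ keeping the relevant quantities in the admissible class; alternatively this range is subsumed by the Triebel--Lizorkin identification of the next step.

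For the outer equivalence — the heart of the matter — I would show that both $\|D^{b}f\|_{p}$ and $\|\mathcal D^{b}f\|_{p}$ compute the homogeneous Triebel--Lizorkin seminorm $\|f\|_{\dot F^{b}_{p,2}}$. For $D^{b}$ this follows from the Littlewood--Paley square-function characterization of $L^{p}$ together with $\Delta_{j}D^{b}f\sim 2^{jb}\widetilde\Delta_{j}f$, giving $\|D^{b}f\|_{p}\simeq\big\|\big(\sum_{j}2^{2jb}|\widetilde\Delta_{j}f|^{2}\big)^{1/2}\big\|_{p}$. For $\mathcal D^{b}$, decompose with $z=x-y$ into dyadic shells $|z|\sim 2^{-j}$,
$$\mathcal D^{b}f(x)^{2}=\sum_{j}\int_{|z|\sim 2^{-j}}\frac{|f(x)-f(x-z)|^{2}}{|z|^{n+2b}}\,dz=:\sum_{j}g_{j}(x)^{2},$$
insert $f=\sum_{k}\Delta_{k}f$, and bound $|f(x)-f(x-z)|$ for $|z|\sim 2^{-j}$ by $\les\sum_{k}\min(2^{k-j},1)^{N}\,\mathcal M_{q}(\Delta_{k}f)(x)$ via the Peetre maximal inequality (the factor $2^{k-j}$ coming from $|f(x)-f(x-z)|\les|z|\,|\nabla\Delta_{k}f(\cdot)|$ on the low-frequency pieces). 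Squaring, summing the resulting geometric series in $j$, and applying the Fefferman--Stein vector-valued maximal inequality for $\mathcal M_{q}$ — valid precisely when $p>q=2n/(n+2b)$ — yields $\|\mathcal D^{b}f\|_{p}\les\|f\|_{\dot F^{b}_{p,2}}$. The reverse inequality is obtained symmetrically, expressing each $\widetilde\Delta_{k}f(x)$ as an average of differences $f(x)-f(x-z)$ over $|z|\sim 2^{-k}$ against a smooth, rapidly decaying kernel.

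Combining the two steps gives $\|J^{b}f\|_{p}\simeq\|f\|_{p}+\|D^{b}f\|_{p}\simeq\|f\|_{p}+\|\mathcal D^{b}f\|_{p}$, which is the assertion. The step I expect to be the main obstacle is the outer equivalence: dominating each $g_{j}$ by a maximal function summably in $j$ without destroying the $\ell^{2}$-structure, and verifying that the exact threshold $p>2n/(n+2b)$ is the range in which the Fefferman--Stein inequality can be applied after the arising maximal operator is dominated by $\mathcal M_{q}$ with $q=2n/(n+2b)$. In the special case $p=2$ relevant to this paper everything collapses to Plancherel: a scaling computation gives $\iint|f(x)-f(y)|^{2}|x-y|^{-n-2b}\,dx\,dy=c_{n,b}\int|\xi|^{2b}|\hat f(\xi)|^{2}\,d\xi$, so $\mathcal D^{b}f$ is a constant multiple of $D^{b}f$ in $L^{2}$ and no maximal-function machinery is needed.
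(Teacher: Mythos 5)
Your outline is essentially correct, but note that the paper does not prove this statement at all: its ``proof'' is the citation of Theorem 1 in Stein's 1961 note, whose classical argument runs through Bessel potentials, the Poisson integral and the $g^{*}_{\lambda}$ square function, the threshold $p>2n/(n+2b)$ arising there from the range of $\lambda$ (with $\lambda n=n+2b$) for which $g^{*}_{\lambda}$ is bounded on $L^{p}$. You instead take the modern route: Mikhlin multipliers for the inner equivalence $\|J^{b}f\|_{p}\simeq\|f\|_{p}+\|D^{b}f\|_{p}$ (your verification of the symbol bounds for $\langle\xi\rangle^{b}/(1+|\xi|^{b})$ and its reciprocal is fine), and the difference characterization of $\dot F^{b}_{p,2}$ via Peetre maximal functions and the Fefferman--Stein vector-valued inequality for the outer one; the threshold enters there exactly as you indicate, and it is also the Sobolev exponent below which $\mathcal{D}^{b}f$ need not even be finite almost everywhere, which is a good sanity check that it cannot be improved. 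What the classical route buys is self-containedness at the level of 1960s harmonic analysis; what yours buys is that the theorem becomes a special case of the difference characterization of Triebel--Lizorkin spaces, and, in the only case this paper actually uses ($p=2$, feeding the product estimate \eqref{Leib} and the pointwise bounds of Proposition \ref{Pontual} and Lemma \ref{P}), everything indeed collapses to the Plancherel computation you give at the end. Two small caveats: the aside about $p\le 1$ via Hardy spaces is an unnecessary distraction (for $n\ge 2$ the hypothesis already forces $p>1$, and it is irrelevant here), and the exponent bookkeeping at the threshold --- beating the $(2^{k-j})^{n/q}$ loss from the translated Littlewood--Paley pieces while keeping $q<p$ --- is asserted rather than carried out; that is acceptable in a sketch, but it is precisely the point where ``valid precisely when $p>2n/(n+2b)$'' needs the detailed Peetre-maximal estimate.
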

\begin{proof}
	See Theorem 1 in \cite{Stein}.
\end{proof}

The above operator $\mathcal{D}^b$ sometimes is named of Stein derivative of order $b$. The advantage in using it, is that is possible deduce a few useful pointwise estimates. 

%The operator $\mathcal{D}^b$ introduced in Theorem \ref{stein} is sometimes
%referred to as the  Stein derivative of order $b$. Note that the last equivalence in
%\eqref{equiv} says that we can compute the norm in $L^{p}_{b}$ by using
%either $D^b$ or $\mathcal{D}^{b}$. The advantage in using $\mathcal{D}^{b}$	is that it is more %tractable when dealing with pointwise  estimates. 
Moreover, from the above theorem, part b), with $p=2$ and $b\in(0,1)$, it's possible to obtain the following product estimate
\begin{equation}\label{Leib}
\|\mathcal{D}^{b}(fg)\|_2 \leq \|f\mathcal{D}^{b}g\|_2 + \|g\mathcal{D}^{b}f\|_2.
\end{equation}

In what follows, the next two results, are useful to obtain the Lemma \ref{DF} below.

\begin{proposition}\label{Pontual}
Let $b\in (0,1)$. For any $t>0$,
\begin{equation}
\mathcal{D}^{b}(e^{-itx|x|})\leq c(t^{b/2}+t^{b}|x|^{b}).
\end{equation}
\end{proposition}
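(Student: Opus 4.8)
The plan is to estimate the Stein derivative $\mathcal{D}^b(e^{-itx|x|})(x)$ directly from its integral definition, namely
\[
\mathcal{D}^b(e^{-itx|x|})(x)=\left(\int_{\R}\frac{|e^{-itx|x|}-e^{-ity|y|}|^2}{|x-y|^{1+2b}}\,dy\right)^{1/2},
\]
by splitting the $y$-integral into the local region $|x-y|\le \rho$ and the far region $|x-y|>\rho$, where $\rho>0$ is a parameter to be optimized at the end (I expect $\rho\sim t^{-1/2}$ or $\rho\sim |x|^{-1}$ depending on the relative size of $t^{1/2}$ and $t|x|$). The phase function is $g(y):=ty|y|$, which is $C^1$ with $g'(y)=2t|y|$, so on the far region one uses the trivial bound $|e^{-ig(x)}-e^{-ig(y)}|\le 2$, and on the local region one uses $|e^{-ig(x)}-e^{-ig(y)}|\le |g(x)-g(y)|$ together with the mean value theorem: for $|x-y|\le\rho$ one has $|g(x)-g(y)|\le \sup_{\zeta\in[x,y]}|g'(\zeta)|\,|x-y|\le 2t(|x|+\rho)|x-y|$.

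Carrying this out, the far part contributes
\[
\left(\int_{|x-y|>\rho}\frac{4}{|x-y|^{1+2b}}\,dy\right)^{1/2}\le c\,\rho^{-b},
\]
and the local part contributes
\[
\left(\int_{|x-y|\le\rho}\frac{4t^2(|x|+\rho)^2|x-y|^2}{|x-y|^{1+2b}}\,dy\right)^{1/2}
\le c\,t(|x|+\rho)\left(\int_0^\rho r^{1-2b}\,dr\right)^{1/2}
= c\,t(|x|+\rho)\,\rho^{1-b},
\]
which is finite precisely because $b<1$. Hence $\mathcal{D}^b(e^{-itx|x|})(x)\le c(\rho^{-b}+t(|x|+\rho)\rho^{1-b})$. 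Choosing $\rho=t^{-1/2}$ gives the bound $c(t^{b/2}+t^{1/2}|x|t^{-(1-b)/2}+t\,t^{-1})=c(t^{b/2}+t^{b/2}(t^{1/2}|x|)+\cdots)$; a cleaner route is to take $\rho=\min\{t^{-1/2},|x|\}$ and check the two cases $|x|\ge t^{-1/2}$ and $|x|<t^{-1/2}$ separately, in each of which one of the two terms $t^{b/2}$ or $t^b|x|^b$ dominates, yielding $\mathcal{D}^b(e^{-itx|x|})(x)\le c(t^{b/2}+t^b|x|^b)$ as claimed.

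The main obstacle — and it is a mild one — is bookkeeping the optimization of $\rho$ so that the cross term $t\,\rho\cdot\rho^{1-b}=t\rho^{2-b}$ and the term $t|x|\rho^{1-b}$ are both controlled by $t^{b/2}+t^b|x|^b$ simultaneously; handling the regimes $t^{1/2}|x|\lesssim 1$ and $t^{1/2}|x|\gtrsim 1$ separately makes this transparent. One should also note that the estimate is pointwise in $x$ and uniform once $t>0$ is fixed, and that the restriction $b\in(0,1)$ is used twice: for convergence of $\int_0^\rho r^{1-2b}\,dr$ at the origin (needs $b<1$) and for convergence of $\int_\rho^\infty r^{-1-2b}\,dr$ at infinity (needs $b>0$).
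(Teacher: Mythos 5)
Your overall strategy is the right one (the paper itself gives no proof, citing Proposition 2 of Nahas--Ponce, and the splitting of the Stein integral at a scale $\rho$ with the trivial bound far away and the mean value theorem nearby is exactly the standard argument there), but the final optimization step, as you wrote it, fails. From your splitting you correctly get
\begin{equation*}
\mathcal{D}^{b}(e^{-itx|x|})(x)\lesssim \rho^{-b}+t|x|\rho^{1-b}+t\rho^{2-b},
\end{equation*}
and the problem is the middle term. With $\rho=t^{-1/2}$ it equals $t^{(1+b)/2}|x|=t^{b}|x|^{b}\,(t^{1/2}|x|)^{1-b}$, which is \emph{not} $O(t^{b/2}+t^{b}|x|^{b})$ when $t^{1/2}|x|\gg 1$; so the claim that with $\rho=t^{-1/2}$ "one of the two terms dominates" is not true in the regime $|x|\geq t^{-1/2}$. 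Your "cleaner" choice $\rho=\min\{t^{-1/2},|x|\}$ is worse: for $|x|\geq t^{-1/2}$ it coincides with $\rho=t^{-1/2}$ and has the same defect, while for $|x|<t^{-1/2}$ it gives $\rho^{-b}=|x|^{-b}$, which blows up as $x\to 0$ and is certainly not bounded by $t^{b/2}+t^{b}|x|^{b}$. (The heuristic $\rho\sim|x|^{-1}$ mentioned at the start has the same problem for small $t$, since then $\rho^{-b}=|x|^{b}\not\lesssim t^{b}|x|^{b}$.)

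The fix stays entirely within your framework: take $\rho=\min\{t^{-1/2},(t|x|)^{-1}\}$ (equivalently $\rho\sim(t^{1/2}+t|x|)^{-1}$). For $|x|\leq t^{-1/2}$ one has $\rho=t^{-1/2}$ and then $\rho^{-b}=t^{b/2}$, $t\rho^{2-b}=t^{b/2}$, and $t|x|\rho^{1-b}\leq t^{(1+b)/2}t^{-1/2}=t^{b/2}$; for $|x|\geq t^{-1/2}$ one has $\rho=(t|x|)^{-1}$ and then $\rho^{-b}=t|x|\rho^{1-b}=(t|x|)^{b}$ while $t\rho^{2-b}=(t|x|)^{b}(t|x|^{2})^{-1}\leq (t|x|)^{b}$. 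In both regimes all three terms are bounded by $c(t^{b/2}+t^{b}|x|^{b})$, which gives the stated estimate; with this correction your argument is a complete and self-contained proof, essentially the one in the cited reference.
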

\begin{proof}
See Proposition 2 in  \cite{NahasPonce}. See also \cite{CunhaPastor}.
\end{proof}
\begin{lemma}\label{P}
Let $b\in (0,1)$. There exists a constant $C_b>0$ such that, for all $t>0$ and $x\in \R$,
$$
\mathcal{D}^{b}(e^{itx^3})\leq C_b\Big(t^{b/3}+t^{1/3+2b/9}+(t^{1/3+2b/3}+t^{2b/3})|x|^{2b}\Big).
$$
\end{lemma}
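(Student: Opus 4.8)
The plan is to work directly from the integral representation of the Stein derivative provided by Theorem~\ref{stein}, part b): writing $h=y-x$,
\[
\mathcal{D}^{b}(e^{it(\cdot)^3})(x)^2=\int_{\R}\frac{\big|e^{itx^3}-e^{it(x+h)^3}\big|^2}{|h|^{1+2b}}\,dh .
\]
The key is a pointwise phase estimate that separates two scales. Using $|e^{i\alpha}-e^{i\beta}|\le\min(2,|\alpha-\beta|)$ together with the elementary identity $(x+h)^3-x^3=3x^2h+3xh^2+h^3$ and the bound $3|x|h^2\le\tfrac32(x^2|h|+|h|^3)$, one gets $|x^3-(x+h)^3|\lesssim x^2|h|+|h|^3$, hence, after the elementary inequality $\min(2,C(A+B))\lesssim\min(1,A)+\min(1,B)$,
\[
\big|e^{itx^3}-e^{it(x+h)^3}\big|^2\lesssim \min\!\big(1,tx^2|h|\big)^2+\min\!\big(1,t|h|^3\big)^2 .
\]
This reduces matters to the two one-dimensional integrals $I_1(x)=\int_{\R}\min(1,tx^2|h|)^2\,|h|^{-1-2b}\,dh$ and $I_2=\int_{\R}\min(1,t|h|^3)^2\,|h|^{-1-2b}\,dh$.

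For each of these I would split the $h$-line at the scale where the two branches of the minimum coincide: $|h|=(tx^2)^{-1}$ for $I_1$, and $|h|=t^{-1/3}$ for $I_2$. On the inner region the integrand is a constant times $|h|^{1-2b}$, integrable near the origin precisely because $b<1$; on the outer region it is $|h|^{-1-2b}$, integrable at infinity because $b>0$ (for $I_2$ one also uses, harmlessly, $b<3$). Carrying out these two elementary integrations gives $I_1(x)\lesssim (tx^2)^{2b}=t^{2b}|x|^{4b}$ and $I_2\lesssim t^{2b/3}$, so that
\[
\mathcal{D}^{b}(e^{it(\cdot)^3})(x)\lesssim\big(I_1(x)+I_2\big)^{1/2}\lesssim t^{b}|x|^{2b}+t^{b/3}.
\]
Since $t^{b}\le t^{1/3+2b/3}+t^{2b/3}$ for every $t>0$ (the first summand dominates when $t\ge1$, the second when $t\le1$, both because $0<b<1$), and $t^{b/3}$ is literally one of the terms written, this already implies the stated inequality; the extra summand $t^{1/3+2b/9}$ in the statement is simply slack.

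There is no essential difficulty beyond organizing the computation; the two points that need care are (i) choosing a phase estimate that genuinely uncouples the ``$x^2|h|$'' scale from the ``$|h|^3$'' scale — it is this separation that turns each resulting integral into a clean single-scale integral with an explicitly computable value — and (ii) verifying convergence of those integrals at both endpoints, which is exactly where the hypothesis $b\in(0,1)$ enters. I would also note the shorter alternative via scaling: since $\mathcal{D}^{b}\big(f(\lambda\cdot)\big)(x)=\lambda^{b}(\mathcal{D}^{b}f)(\lambda x)$ for $\lambda>0$, one has $\mathcal{D}^{b}(e^{it(\cdot)^3})(x)=t^{b/3}\,\mathcal{D}^{b}(e^{i(\cdot)^3})(t^{1/3}x)$, so it suffices to establish the case $t=1$, namely $\mathcal{D}^{b}(e^{i(\cdot)^3})(x)\lesssim 1+|x|^{2b}$, which is exactly the $t=1$ instance of the two-region argument above.
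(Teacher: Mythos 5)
Your argument is correct, and it is self-contained, which is more than the paper offers: the paper does not prove this lemma at all, it simply cites Lemma 2.2 of \cite{bum}. Your route — write the Stein derivative as $\int_{\R}|e^{itx^3}-e^{it(x+h)^3}|^2|h|^{-1-2b}\,dh$, bound the phase increment by $t(x^2|h|+|h|^3)$ via $3|x|h^2\le\tfrac32(x^2|h|+|h|^3)$, uncouple the two scales with $\min(2,C(A+B))\lesssim\min(1,A)+\min(1,B)$, and compute the two single-scale integrals (the splitting points $(tx^2)^{-1}$ and $t^{-1/3}$, with convergence at the origin from $b<1$ and at infinity from $b>0$) — is exactly the kind of direct computation behind such estimates (compare Proposition 2 of \cite{NahasPonce} for $e^{-itx|x|}$), and all the individual steps check out: $I_1\lesssim (tx^2)^{2b}$, $I_2\lesssim t^{2b/3}$, and the elementary comparison $t^{b}\le t^{1/3+2b/3}+t^{2b/3}$ for all $t>0$ (using $0<b<1$) converts your bound $t^{b/3}+t^{b}|x|^{2b}$ into the stated one, so the term $t^{1/3+2b/9}$ is indeed superfluous slack. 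In other words, you prove a slightly sharper inequality than the one quoted from \cite{bum}; this is harmless here, since the lemma is only used in Lemma \ref{DF}, where all powers of $t$ are absorbed into an increasing factor $\rho(t)$. The scaling remark $\mathcal{D}^{b}(f(\lambda\cdot))(x)=\lambda^{b}(\mathcal{D}^{b}f)(\lambda x)$ is also correct and gives the quickest reduction to $t=1$.
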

\begin{proof}
See Lemma 2.2 in \cite{bum}.
\end{proof}
The following result is a key ingredient to deal with the estimates in Theorems \ref{non} and \ref{2t}. 
\begin{lemma}\label{DF} For all $\theta \in (0,1)$ and $t\in (0,\infty)$
\begin{equation}
\|\Dt(\ka(\xi,t) \hat{f})\|\lesssim \rho(t)\big(\|J^{2\theta}f\|+\||x|^\theta f\|\big),
\end{equation}
where $\rho$ is increasing in $t$.
\end{lemma}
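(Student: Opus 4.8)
The goal is to bound $\|\Dt(\mu(\xi,t)\hat f)\|$ in terms of $\|J^{2\theta}f\|+\||x|^\theta f\|$ with a constant that grows (monotonically) in $t$. The plan is to exploit the fact that $\mu(\xi,t)=e^{it\xi^3}\cdot e^{-it\xi|\xi|}$ factors into the two phases already analyzed pointwise in Lemma \ref{P} and Proposition \ref{Pontual}, and to use $\Dt=\mathcal{D}^\theta_\xi$ as a Stein derivative in the $\xi$ variable, so that Theorem \ref{stein} and the product rule \eqref{Leib} apply directly. First I would write, via \eqref{Leib} applied twice (treating $\mu=e^{it\xi^3}e^{-it\xi|\xi|}$ as a product of three factors with the third being $\hat f$),
\begin{equation*}
\|\Dt(\mu(\xi,t)\hat f)\| \les \|\hat f\,\Dt(e^{it\xi^3})\| + \|\hat f\,\Dt(e^{-it\xi|\xi|})\| + \|\Dt \hat f\|,
\end{equation*}
using that $|e^{it\xi^3}|=|e^{-it\xi|\xi|}|=1$ pointwise so the modulus factors drop out of the remaining terms.

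Next, for the first two terms I would insert the pointwise bounds: Lemma \ref{P} gives $\Dt(e^{it\xi^3})\les C_\theta(t^{\theta/3}+t^{1/3+2\theta/9}+(t^{1/3+2\theta/3}+t^{2\theta/3})|\xi|^{2\theta})$, and Proposition \ref{Pontual} gives $\Dt(e^{-it\xi|\xi|})\les c(t^{\theta/2}+t^\theta|\xi|^\theta)$. Multiplying by $|\hat f(\xi)|$ and taking $L^2$ norms, the constant-in-$\xi$ pieces produce $\|\hat f\| = \|f\|$ (controlled by $\|J^{2\theta}f\|$), the $|\xi|^{2\theta}$ piece produces $\||\xi|^{2\theta}\hat f\| \les \|J^{2\theta}f\|$ (since $|\xi|^{2\theta}\les \lan^{2\theta}$), and the $|\xi|^\theta$ piece produces $\||\xi|^\theta\hat f\| \les \|J^\theta f\| \les \|J^{2\theta}f\|$. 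Collecting the $t$-powers into a single increasing function (say $\rho(t):=C(1+t^{\theta/3}+t^{1/3+2\theta/9}+t^{1/3+2\theta/3}+t^{2\theta/3}+t^{\theta/2}+t^\theta)$, which is manifestly nondecreasing on $(0,\infty)$), these two terms are $\les \rho(t)\|J^{2\theta}f\|$.

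For the last term $\|\Dt\hat f\|$, I would use the characterization in Theorem \ref{stein} with $n=1$, $p=2$, $b=\theta$: since $\mathcal{D}^\theta$ in the frequency variable applied to $\hat f$ satisfies $\|\hat f\|_2 + \|\mathcal{D}^\theta\hat f\|_2 \simeq \|J^\theta_\xi \hat f\|_2$, and by Plancherel $\|J^\theta_\xi \hat f\|_2 = \|\lanx^\theta f\|_2 \les \|f\| + \||x|^\theta f\|$, this term is bounded by $\|f\|+\||x|^\theta f\| \les \|J^{2\theta}f\| + \||x|^\theta f\|$, with a constant independent of $t$ (so harmless after absorbing into $\rho$). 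Combining the three estimates yields the claim, with $\rho$ increasing in $t$.

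\textbf{Main obstacle.} The only genuinely delicate point is the bookkeeping of which Sobolev/weighted norm controls each piece after applying Plancherel and the pointwise phase estimates — in particular making sure the $|\xi|^{2\theta}$ weight coming from the cubic phase is absorbed by $\|J^{2\theta}f\|$ rather than requiring a $\||x|^{2\theta}f\|$-type term, and keeping the exponents from Lemma \ref{P} straight so that $\rho$ really is monotone. A secondary technical care is the legitimacy of applying the product inequality \eqref{Leib} to the (non-smooth, merely bounded) factor $e^{-it\xi|\xi|}$; this is standard once one notes \eqref{Leib} follows from the integral representation of $\mathcal{D}^\theta$ in Theorem \ref{stein}(b) together with the triangle inequality, and requires no smoothness — only that the pointwise bound of Proposition \ref{Pontual} makes $\Dt(e^{-it\xi|\xi|})$ a locally integrable multiplier against $\hat f$.
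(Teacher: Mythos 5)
Your proposal is correct and follows essentially the same route as the paper: factor $\mu=e^{it\xi^3}e^{-it\xi|\xi|}$, apply the product estimate \eqref{Leib} (twice) together with the pointwise bounds of Lemma \ref{P} and Proposition \ref{Pontual}, absorb the resulting $|\xi|^{\theta}$ and $|\xi|^{2\theta}$ weights into $\|\lan^{2\theta}\hat f\|\sim\|J^{2\theta}f\|$, and control the remaining $\|\Dt\hat f\|$ via Theorem \ref{stein}/\eqref{equiv} and Plancherel by $\|f\|+\||x|^{\theta}f\|$. The bookkeeping of $t$-powers into an increasing $\rho$ matches the paper's argument as well.
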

\begin{proof}
Using inequality \eqref{Leib}, Lemma \ref{P} and Proposition \ref{Pontual} we obtain 
\begin{equation*}
\begin{split}
\|\Dt(\ka(\xi,t) \hat{f})\|\lesssim& \ \|\Dt (e^{-it\xi|\xi|})e^{it\xi^3}\hat{f}\|+\|e^{-it\xi|\xi|}\Dt(e^{it\xi^3}\hat{f})\|\\
\lesssim& \ \|(t^{\frac{\theta}{2}}+t^{\theta}|\xi|^\theta)\hat{f}\|+\|\big(t^{\frac{\theta}{3}}+t^{\frac{1}{3}+\frac{2\theta}{9}}+(t^{\frac{1}{3}+\frac{2\theta}{3}}+t^{\frac{2\theta}{3}})|\xi|^{2\theta}\big)\hat{f}\|+\\
&+\|\Dt \hat{f}\|\\
\lesssim& \ \rho(t)\big(\|\hat f\|+\||\xi|^\theta\hat f\|+\||\xi|^{2\theta}\hat f\|+\|\Dt \hat f\|\big)\\
\les & \ \rho(t)\big(\|\lan^{2\theta}\hat f\|+\|\Dt \hat f\|\big).
\end{split}
\end{equation*}
Hence, the Plancherel identity and inequality \eqref{equiv} give us the desired result.
\end{proof}

The next Lemma will be useful in the proof of Theorem \ref{uni1}.
\begin{lemma}\label{hzero}
Let $f\in H^s(\R)$, $s>1/2$ be a real valued function. If there exists an open set $I\subset \R$ such that
\begin{equation}
f(x)=\h f(x)=0, \quad \forall x\in I,
\end{equation}
then $f\equiv 0$.
\end{lemma}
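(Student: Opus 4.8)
\textbf{Proof proposal for Lemma \ref{hzero}.}

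The plan is to exploit the fact that $f$ together with its Hilbert transform $\h f$ both vanish on an open set, and to combine this with the standard observation that $f + i\h f$ (or $f - i\h f$) is, up to Fourier multipliers, the boundary value of a function holomorphic in a half-plane. More precisely, write $F = f + i\h f$; then $\widehat{F}(\xi) = \hat f(\xi)(1 + \text{sgn}(\xi)) = 2\hat f(\xi)\mathbf{1}_{\{\xi>0\}}$ (with the convention at $\xi=0$ being a null set, irrelevant in $L^2$). Hence $\widehat{F}$ is supported in $[0,\infty)$, so $F$ extends to a function $\widetilde F(x+iy) = \frac{1}{2\pi}\int_0^\infty e^{i(x+iy)\xi}\widehat F(\xi)\,d\xi$ that is holomorphic in the upper half-plane $\{y>0\}$ and has $L^2$ boundary values $F$ as $y\downarrow 0$ (a Paley--Wiener / Hardy space $H^2$ of the half-plane argument; the regularity $f\in H^s$, $s>1/2$, even guarantees $F$ is continuous up to the boundary). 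The hypothesis gives $F(x) = f(x) + i\h f(x) = 0$ for all $x\in I$.

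Next I would invoke a boundary uniqueness theorem for $H^2$ functions on the half-plane: a function in the Hardy space $H^2(\{y>0\})$ whose boundary trace vanishes on a set of positive Lebesgue measure must be identically zero. This is the classical F.\ and M.\ Riesz / Lusin--Privalov type uniqueness statement; since $I$ is open it certainly has positive measure, so $\widetilde F \equiv 0$ in the upper half-plane, and therefore $F\equiv 0$ on all of $\R$. Equivalently, $\widehat F \equiv 0$, i.e.\ $\hat f(\xi) = 0$ for $\xi > 0$. Running the same argument with $\overline{F} $-type object, or simply observing that $f$ is real-valued so $\hat f(-\xi) = \overline{\hat f(\xi)}$, we conclude $\hat f(\xi) = 0$ for $\xi < 0$ as well. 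Hence $\hat f \equiv 0$ a.e.\ and $f \equiv 0$.

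An alternative route, avoiding explicit Hardy-space machinery, is to use the Stein--Weiss / conjugate function characterization: the pair $(u,v)$ with $u(x,y) = (P_y * f)(x)$ and $v(x,y) = (P_y * \h f)(x)$ (Poisson extensions) satisfies the Cauchy--Riemann equations in the upper half-plane, so $u + iv$ is holomorphic there; since $f = \h f = 0$ on $I$, the nontangential boundary values of $u+iv$ vanish on the open set $I$, and by the reflection principle / identity theorem for holomorphic functions one propagates the vanishing. The main obstacle, and the step deserving the most care, is the boundary uniqueness input: one must make sure the regularity hypothesis $s>1/2$ (which yields $f$ continuous and $F$ in a genuine Hardy class with a well-defined trace) is enough to legitimately pass from ``boundary value zero on an open set'' to ``holomorphic extension identically zero,'' rather than merely ``zero on a set where some weaker notion of trace vanishes.'' Once that is pinned down — via the classical theorem that a nonzero $H^p$ function on a half-plane can vanish only on a null subset of the boundary — the conclusion $f\equiv 0$ is immediate.
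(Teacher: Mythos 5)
Your argument is correct: with the paper's convention $\h f=-i(\mathrm{sgn}(\xi)\hat f)^{\vee}$ one indeed gets $\widehat{f+i\h f}=(1+\mathrm{sgn}(\xi))\hat f$ supported in $[0,\infty)$, so $f+i\h f$ is the boundary value of a Hardy--$H^2$ function on the upper half-plane, and the classical fact that a nonzero $H^2$ function cannot have boundary trace vanishing on a set of positive measure, together with $\hat f(-\xi)=\overline{\hat f(\xi)}$ for real $f$, yields $f\equiv 0$ (the hypothesis $s>1/2$ is only needed to make the pointwise vanishing on $I$ meaningful). The paper itself gives no argument here — it simply cites Corollary 2.2 of \cite{kpv} — and your half-plane analytic extension proof is essentially the argument underlying that cited result, so this is the same approach, written out in full.
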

\begin{proof}
See Corollary 2.2 in \cite{kpv}.
\end{proof}

To proof the Theorem \ref{uni2} we made use of the following result.

\begin{lemma}\label{fzero}
Let $f\in L^2(\R)$ be a real valued function. If there exists an open set $I\subset \R$, $0\in I$, such that
\begin{equation}\label{f0}
f(x,0)=0, \quad x\in I,
\end{equation}
and for each $N\in \Z^{+}$
\begin{equation}
\int_{|x|\leq R}|\h f(x)|^2 dx\leq c_N R^N \quad \mbox{as} \quad R\downarrow 0,
\end{equation}
then
\begin{equation}\label{fx0}
f(x)=0, \quad x\in \R.
\end{equation}
\end{lemma}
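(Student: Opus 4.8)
The statement to be proven is Lemma~\ref{fzero}: a real-valued $f\in L^2(\R)$ that vanishes on an open neighborhood $I$ of the origin and whose Hilbert transform satisfies the infinite-order vanishing bound $\int_{|x|\le R}|\h f|^2\,dx\le c_N R^N$ as $R\downarrow 0$ must vanish identically.

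\medskip

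\noindent\textbf{Proof plan.}

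The plan is to pass to the Fourier/complex-analytic side, following the circle of ideas in \cite{kpv}. First I would observe that since $f$ is real valued and $\h f$ has Fourier multiplier $-i\,\text{sgn}(\xi)$, the function $F := f + i\h f$ has Fourier transform supported in $[0,\infty)$ (up to the sign convention), so $F$ is the boundary value of a function holomorphic in the upper half-plane $\C^+$, i.e. $F\in H^2(\C^+)$ (Hardy space). The hypothesis \eqref{f0} says $f=0$ on $I$, hence on $I$ we have $F = i\h f$, so $\mathrm{Re}\,F = 0$ on the open interval $I\ni 0$. Meanwhile the decay hypothesis \eqref{intcon}-type bound says $\h f$ — and therefore $\mathrm{Im}\,F = \h f$ on $I$ — vanishes to infinite order at $x=0$ in the $L^2$-averaged sense: $\int_{|x|\le R}|\h f|^2\,dx = O(R^N)$ for every $N$. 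Combined with $\mathrm{Re}\,F\equiv 0$ on $I$, this gives $\int_{|x|\le R}|F(x)|^2\,dx = O(R^N)$ for every $N$, i.e. the boundary value of the Hardy-space function $F$ vanishes to infinite order at the boundary point $0$.

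\medskip

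The key step is then a Hardy-space rigidity statement: if $F\in H^2(\C^+)$ and its boundary trace vanishes to infinite order at a single boundary point (in the averaged sense above), then $F\equiv 0$. The mechanism is the classical fact that a nonzero $H^2(\C^+)$ function cannot have a boundary value that is too small on a set of positive measure near a point — more precisely one uses that $\log|F|$ is integrable against the Poisson kernel (or that the inner--outer factorization forces $\int \frac{\log|F(t)|}{1+t^2}\,dt > -\infty$ unless $F\equiv0$), which is incompatible with $|F|$ decaying faster than any polynomial on a neighborhood of $0$. Concretely: on $I$ one has $|F(x)|^2 = |\h f(x)|^2$ and the averaged bound forces $\log|F|$ to be non-integrable near $0$ unless $F$ vanishes identically. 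I would cite or reproduce the corresponding lemma from \cite{kpv} (this is essentially the content underlying Lemma~\ref{hzero}/Corollary~2.2 there, in its quantitative form). Once $F\equiv 0$ we get $f = \mathrm{Re}\,F \equiv 0$, which is \eqref{fx0}.

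\medskip

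The main obstacle I anticipate is making the ``infinite-order vanishing of the boundary trace forces $F\equiv0$'' step precise in the $L^2$-averaged formulation rather than a pointwise one: one must convert $\int_{|x|\le R}|\h f|^2 = O(R^N)$ into a statement strong enough to contradict the Poisson-integrability of $\log|F|$. A clean way is to note that if $F\not\equiv 0$ then $F$ is outer times inner, the inner factor is bounded, and the outer factor $G$ satisfies $\log|G(x+i0)| = \log|F(x+i0)|$ a.e.; the mean-value property $\log|G(i)| = \frac1\pi\int \frac{\log|F(t)|}{1+t^2}\,dt$ would then force that integral to be finite, whereas the averaged decay bound on $I$ (a set of positive measure containing $0$) drives $\int_{|x|\le R}\log|F(x)|\,dx \to -\infty$ too fast, a contradiction — here one uses Jensen's inequality to pass from the $L^2$ average to the logarithmic average. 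I would also need to double-check the Fourier-support/sign conventions so that $f+i\h f$ (and not $f - i\h f$) is the one lying in $H^2(\C^+)$, but that is routine. Assembling these pieces yields $f\equiv 0$, completing the proof.
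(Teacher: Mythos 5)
Your overall framework (pass to $F=f+i\h f\in H^2(\C^+)$ and use boundary rigidity) is the right circle of ideas, but the pivotal claim you rest on is false as stated: it is \emph{not} true that an $H^2(\C^+)$ function whose boundary trace satisfies $\int_{|x|\le R}|F(x)|^2dx\le c_NR^N$ for every $N$ (with $c_N$ allowed to depend on $N$) must vanish identically. Take the outer function $G$ with boundary modulus $w(x)=e^{-|x|^{-1/2}}$ for $|x|\le 1$ and $w(x)=(1+x^2)^{-1}$ for $|x|>1$: then $\log w$ is Poisson-integrable (indeed $|x|^{-1/2}$ is locally integrable), so $G\in H^2(\C^+)$, $G\not\equiv 0$, and yet $e^{-2|x|^{-1/2}}\le C_N|x|^N$ gives exactly the infinite-order averaged vanishing at $0$. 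This also shows why your proposed fix cannot work: Jensen's inequality only yields $\int_{|x|\le R}\log|F|\,dx\le R\,(\log c_N+(N-1)\log R)$, which, with $N$-dependent constants, is perfectly compatible with $\log|F|$ being integrable near $0$; the averaged decay simply does not force divergence of the logarithmic integral. So the step ``infinite-order vanishing at a single boundary point kills $F$'' is a genuine gap, not a technicality.

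What actually closes the argument is the hypothesis that $f$ vanishes on the \emph{whole} interval $I$, used for more than just writing $F=i\h f$ there. Since $f\equiv 0$ on $I$, for $x$ in the component $J$ of $I$ containing $0$ one has $\h f(x)=\frac1\pi\int_{\R\setminus I}\frac{f(y)}{x-y}\,dy$, which is real-analytic on $J$ (equivalently: ${\rm Re}\,F=0$ on $I$ lets one continue $F$ holomorphically across $I$ by Schwarz reflection). The averaged bound then forces every derivative of $\h f$ to vanish at $0$ (a nonzero Taylor coefficient of order $m$ would give $\int_{|x|\le R}|\h f|^2\gtrsim R^{2m+1}$), so by analyticity $\h f\equiv 0$ on $J$. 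Now $f$ and $\h f$ both vanish on an open set, and Lemma \ref{hzero} (equivalently, the log-integrability argument, which \emph{is} valid once $F$ vanishes on a set of positive measure) gives $f\equiv 0$. Note also that the paper itself offers no proof but cites Lemma 4.1 of \cite{kpv}, whose argument proceeds exactly through this analyticity/reflection step that your sketch omits.
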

\begin{proof}
See Lemma 4.1 in \cite{kpv}.
\end{proof}

\section{Proof of Theorems \ref{non0}--\ref{low4theta}}\label{non1}

This section is devoted to proof Theorems \ref{non0}--\ref{low4theta}. For Theorems \ref{non0} and\ref{non}, we use estimates on the norm of weighted spaces of the group associated with the linear part of the Benjamin equation. Such technique is also present in \cite{dBO} and \cite{gbozk}.

In Theorems \ref{low4} and \ref{low4theta}, we use the approach given by the authors in \cite{flp1}.

\begin{proof}[Proof of Theorem \ref{non0}]
Let $u$ and $v$ be the solutions of the IVP \eqref{b}, with initial data $\phi$ and $\varphi$, respectively. Suppose also $\phi\neq \varphi$, then by putting $\sigma:=\phi-\varphi$, $w:=u-v$ and $z:=\frac12 \p_x (u^2-v^2)$,  the integral equation \eqref{inte} gives us

\begin{equation}
\begin{split}\label{int}
w(t)=\ &U(t)\sigma-\int_0^t U(t-\tau) z(\tau)d\tau.
\end{split}
\end{equation}
We observe that for all $T>0$, the Theorem A implies that $u,v\in C([-T,T];\z_{9-,5/2-})$. Choosing $1/4<\epsilon<1/2$ and setting  $s=s_\epsilon=\frac{2+\epsilon}{\epsilon}$, it follows that

\begin{equation}\label{epsilon}
s<9 \quad \mbox{and} \quad 2+\epsilon<5/2.
\end{equation}

 As a consequence,  the constant 
\begin{equation}\label{Nuv}
 N:=\sup_{[-T,T]}\{\|u(t)\|_{\z_{s,2+\epsilon}}+\|v(t)\|_{\z_{s,2+\epsilon}}\},
\end{equation} 
   is finite.

Multiplying the last identity by $x^4$ and taking the Fourier transform follows that

\begin{equation}\label{2.8}
\begin{split}
 \p_\xi^4(\widehat{w(t)})=& \p_\xi^4(\mu(\xi,t)\ha)-\int_0^t  \p_\xi^4(\mu(\xi,t-\tau)\hat{z}(\tau))d\tau,
\end{split}
\end{equation}
we recall that $\mu(\xi,t)=e^{it(\xi^3-\xi|\xi|)}$.

Before to deal with the second right-hand of the last identity, we will need of the third derivative of the function $\ka \ha$, that, after several computations is given by

\begin{equation}
\begin{split}\label{der3}
\p_\xi^3 (\ka(\xi,t) \ha)=& \Big(\big(4it \de+6it-12t^2 \xi+54t^2|\xi|\xi-54t^2 \xi^3+8it^3|\xi|\xi^2-36it^3\xi^4+54it^3|\xi|\xi^4\\
&-27it^3\xi^6\big) \ha+\big(-6it\s +18it \xi-12t^2\xi^2 +36t^2|\xi|\xi^2-27t^2\xi^4\big)\p_\xi \ha\\&+\big(9it \xi^2-6it |\xi|\big)\p_\xi^2 \ha+\p_\xi^3 \ha\Big)\ka.
\end{split}
\end{equation}

Using the hypothesis \eqref{v} the term above that involves the delta Dirac function, can be computed as 

\begin{equation}\label{E1}
%\begin{split}\label{E1}
-4i t \ha \psi\delta%=-4i\ha (0)\psi(0,t)\delta
=-4i\Big(\int\sigma (x)dx\Big)\delta=-4i\Big(\int\phi(x)dx-\int \varphi(x)dx\Big)\delta=0.
%\end{split}
\end{equation} 

Then, the delta Dirac function does not appear in identity \eqref{der3}. Using this fact, we can take the derivative with respect to $\xi$ variable in both sides of identity \eqref{der3}, to obtain                          %Then, after some calculations we get

\begin{equation}
\begin{split}\label{der4}
\p_\xi^4 (\ka(\xi,t) \ha)=& \Big(\big(-12 t^2  -120i |\xi|+180 \xi^2-288 t^2 \xi^3-540i |\xi|\xi^3+324 \xi^5\\
&+48it^3 \xi |\xi|-216 t^2 \xi^6-216it|\xi|\xi^6+81 \xi^8+96i t^3 |\xi|\xi^4+16  t^4 \xi^4\big)\ha\\
&-4\big(-4i t \de+6-12 t^2 \xi-54i\xi|\xi|+54\xi^3-36t^2 \xi^4-54it |\xi|\xi^4+27\xi^6+\\
&+8it^3|\xi|\xi^2\big)\p_\xi \ha+6\big(-2it\s +6 \xi-4 t^2 \xi^2-12it|\xi|\xi^2 +9 \xi^4\big)\p_\xi^2\ha\\
&+4\big(3 \xi^2-2i t |\xi|\big)\p_\xi^3 \ha+\p_\xi^4 \ha\Big)\ka.
\end{split}
\end{equation}

The next step is to employ the identity \eqref{der4} to deal with  the first and second terms on the right-hand side of the identity \eqref{int}. First, we will compute explicitly the term in \eqref{der4} which includes the delta Dirac function. To do this, it's seen that the hypothesis \eqref{v1} implies

\begin{equation}\label{E14}
%\begin{split}\label{E14}
16i t \p_\xi \ha \psi\delta=%16it \p_\xi \ha(0)\delta=
-16\Big(\int x \sigma (x)dx\Big) \delta=-16\Big(\int x \phi(x)dx-\int x\varphi(x)dx\Big) \delta=0.
%\end{split}
\end{equation}
From \eqref{E1} and \eqref{E14}, to estimate the first term on the right-hand side of \eqref{2.8} we may write
\begin{equation}
\begin{split}\label{dtheta}
\|x^4 U(t)\sigma\|\les & \ \|\ha\|+\||\xi|\ha\|+\|\xi^2 \ha\|+\|\xi^3 \ha\|+\||\xi|\xi^3 \ha\|+\|\xi^5\ha\|+\|\|\xi|\xi \ha\|\\
&+\|\xi^6 \ha\|+\||\xi|\xi^6 \ha\|+\|\xi^8 \ha\|+\||\xi|\xi^4 \ha\|+\|\xi^4 \ha\|+\|\p_\xi\ha\|\\
&+\|\xi\p_\xi\ha\|+\| |\xi|\xi \p_\xi\ha\|+\|\xi^3 \p_\xi \ha\|+\|\xi^4 \p_\xi\ha\|+\| |\xi|\xi^4 \p_\xi\ha\|\\
&+\|\xi^6 \p_\xi\ha\|+\||\xi|^3 \p_\xi\ha\|+\|\p_\xi^2\ha\|+\|\xi \p_\xi^2\ha\|+\|\xi^2 \p_\xi^2\ha\|+\||\xi|^3 \p_\xi^2\ha\|\\
&+\|\xi^4 \p_\xi^2\ha\|+\|\xi^2\p_\xi^3\ha\|+\|\xi \p_\xi^3\ha\|+\|\p_\xi^4\ha\|\\
:=& \ B_1+\cdots+B_{28}.
\end{split}
\end{equation}

We will estimate only some terms in \eqref{dtheta}. Using Lemma \ref{jota} (with $\delta=4$, $\nu=8$) and Plancherel identity it follows that  
\begin{equation}
\begin{split}\label{E21}
\|B_{19}\|&\les \|\xi^6 \p_\xi \hat \sigma\|\\
&\les \|\lan^5 \hat \sigma\|+\|J_\xi(\lan^6 \hat \sigma)\|\\
&\les \|J^{5}\sigma \|+\|J_\xi^4 \hat \sigma\|+\|\lan^8 \hat \sigma\|\\
%&\les \|J^\theta (x^2 \sigma)\|+\|J^\theta(x\sigma)\|+\|J^\theta(x^2 \p_x \sigma)\|+\||x|^{\theta+2}\sigma\|\\
%&\les \|J^{\theta+2}\sigma\|+\||x|^{\theta+2}\sigma\|+\|J^{\theta+2}\p_x \sigma\|+\||x|^{\theta+2}%\p_x\sigma\|\\
&\les \|J^{8}\sigma\|+\|\lanx^4\sigma\|.
\end{split}
\end{equation} 
Also by Proposition \ref{jota} (with $\delta=\nu=4$) we obtain

\begin{equation}
\begin{split}\label{E27}
\| B_{27}\|\les& \ \|\xi \p_\xi^3 \hat \sigma\|\\
\les& \ \|\hat \sigma\|+\|J_\xi(\lan \hat \sigma)\|+\|J^{3}_\xi(\lan \hat \sigma)\|\\
\les& \ \|J_\xi^4 \hat \sigma\|+\|\lan^4 \hat \sigma\|\\
\les& \  \|\lanx^4\sigma\|+\|J^{4}\sigma\|.
\end{split}
\end{equation}

To estimate the other terms we use the same argument above. Thus, proceeding in this way we conclude that 
\begin{equation}\label{Ej}
\| B_j\|\les \|J^{8}\sigma\|+\|\lanx^{4}\sigma\|, \quad j=1,...,28.
\end{equation}
Then, \eqref{E1}--\eqref{Ej} imply that

\begin{equation}
\begin{split}\label{Usigma}
\|x^{4}U(t)\sigma\|\les \|J^{8}\sigma\|+\|x^{4}\sigma\|.
\end{split}
\end{equation}

In the following we will estimate the second term on the right-hand side of \eqref{2.8}. First, observe that $\hat z(\tau,\xi)=\frac{i}{2}\xi(\widehat{u^2}-\widehat{v^2})$, then for any $\tau \in [-T,T]$
\begin{equation}\label{z0}
\hat{z}(\tau,0)=0,
\end{equation}
and
\begin{equation}\label{pz0}
%\begin{split}
\p_\xi \hat {z}(\tau,0)=\frac{i}{2}(u^2-v^2)^{\wedge}(\tau,0)=\frac{i}{2}(\|u(\tau)\|^2-\|v(\tau)\|^2)=\frac{i}{2}(\|\phi\|^2-\|\varphi\|^2)=0,
%\end{split}
\end{equation}
where in the last identity we used the conservation law \eqref{norm} and \eqref{norma}.
Hence, from \eqref{z0}, \eqref{pz0} and putting $z(\tau)$ instead of $\sigma$, in \eqref{Usigma}, we obtain

\begin{equation}\label{zest}
\|x^4 U(t-\tau)z(\tau)\|\les \|J^{8}z(\tau)\|+\|x^{4}z(\tau)\|.
\end{equation}

Since
\begin{equation}
z=\frac12\big(\p_x w (u+v)+w \p_x(u+v)\big),
\end{equation}
by \eqref{zest} follows that for all $\tau \in [-T,T]$

\begin{equation}
\begin{split}\label{zx}
\|x^{4}U(t-\tau)z(\tau)\|\les& \ \|J^{8}(\p_x w (u+v))\|+\|J^{8}(w \p_x(u+v))\|\\
                    &+\|x^{4}w \p_x(u+v)\|+\|x^{4}\p_x w (u+v)\|.
\end{split}
\end{equation}

Next, we will estimate the terms on the right-hand side of the inequality \eqref{zx}. By using \eqref{Nuv}, Sobolev's embedding and Proposition \ref{jota} (with $\delta=s$ and $\nu=2+\epsilon$) we obtain 

\begin{equation}
\begin{split}\label{x4pxw}
\|x^{4}\p_x w (u+v)\|&\leq \|x^2(u+v)\|_{L^\infty_x}\|x^{2}\p_x w\|\\
&\les (\|x^2u\|_{L^\infty_x}+\|x^2v\|_{L^\infty_x})(\|xw\|+\|J(\lanx^{2}w)\|)\\
&\les (\|J(\lanx^2 u)\|+\|J(\lanx^2 v)\|)(\|J^{s}w\|+\|\lanx^{2+\epsilon}w\|)\\
&\les (\|J^s u\|+\|\lanx^{2+\epsilon} u\|+\|J^s v\|+\|\lanx^{2+\epsilon} v\|)^2\\
&\les N^2.
\end{split}
\end{equation}

Analogously to the last inequality we see that
\begin{equation}
\begin{split}\label{4theta}
\|x^{4}w \p_x(u+v)\|\les \ N^2.
\end{split}
\end{equation}

To estimate the fist term on the right-hand side of \eqref{zx}, since $H^8(\R)$ is a Banach algebra, follow that
\begin{equation}
\begin{split}\label{j120}
\|J^{8}(\p_x w (u+v))\|\les \|J^8(\p_x w)\|\|J^8(u+v)\|\les \|J^9(u-v)\|\|J^8(u+v)\|\les N^2.
\end{split}
\end{equation}

Also, in a similar way to \eqref{j120} it's seen that
\begin{equation}
\begin{split}\label{j123}
\|J^{8}(w \p_x(u+v))\|&\les N^2.
\end{split}
\end{equation}
Thus, from \eqref{zx}--\eqref{j123} we deduce that

\begin{equation}
\begin{split}\label{zpx}
\|x^4 U(t-\tau)z(\tau)\|\les N^2, \quad \mbox{for all} \ \tau\in [-T,T].
\end{split}
\end{equation}
Therefore, \eqref{int}, \eqref{Usigma} and \eqref{zpx} imply that

\begin{equation*}
\begin{split}
\|x^{4}w(t)\|&\les \|J^{8}\sigma\|+\|x^{4}\sigma\| +N^2\int_0^t d\tau\\
             &\les \|J^{8}\phi\|+\|x^{4}\phi\|+\|J^{8}\varphi\|+\|x^{4}\varphi\|+|t|N^2, \quad \mbox{for any} \ t\in [-T,T].
\end{split}
\end{equation*}

This gives us the desired result.

\end{proof}

\begin{proof}[Proof of Theorem \ref{non}]
Suppose that $u$ and $v$ are solutions of the IVP \eqref{b}, with $u(0)=\phi$, $v(0)=\varphi$, $\varphi\neq \phi$ and $\varphi,\phi\in \z_{9+2\theta,4+\theta}$. Here we are assuming  the same notation as at beginning of the proof of Theorem \ref{non0}, that is $\sigma:=\phi-\varphi$, $w:=u-v$ and $z:=\frac12 \p_x (u^2-v^2)$. Using Theorem A, we see that for all $T>0$, $u,v\in C([-T,T];\z_{9+2\theta,5/2-})$. In addition, Theorem \ref{non0} implies that $w\in C([-T,T];L^2(|x|^8 dx))$.  Hence, the constant 
\begin{equation}\label{Muv}
M:=\sup_{[0,T]}\{\|u(t)\|_{\z_{9+2\theta,2+\theta}}+\|v(t)\|_{\z_{9+2\theta,2+\theta}}+\|x^4 w(t)\|\},
\end{equation} 
 is finite. 

 Then, proceeding by similar way as the proof of Theorem \ref{non0}, we obtain

\begin{equation*}
\begin{split}
 \p_\xi^4(\widehat{w(t)})=& \p_\xi^4(\mu(\xi,t)\ha)-\int_0^t  \p_\xi^4(\mu(\xi,t-\tau)\hat{z}(\tau))d\tau.
\end{split}
\end{equation*}
%we recall that $\mu(\xi,t)=e^{it(\xi^3-\xi|\xi|)}$.

From the last identity we conclude that

\begin{equation}\label{Dxi}
\begin{split}
D_\xi^\theta \p_\xi^4(\widehat{w(t)})=&D_\xi^\theta \p_\xi^4(\mu(\xi,t)\ha)-\int_0^t D_\xi^\theta \p_\xi^4(\mu(\xi,t-\tau)\hat{z}(\tau))d\tau.
\end{split}
\end{equation} 

In the next, we will estimate the terms in the right-hand side of identity \eqref{Dxi}. To do this, we observe that from the hypothesis \eqref{v} and \eqref{v1}, follows that the identity \eqref{der4} still hold, that is

\begin{equation*}
\begin{split}
\p_\xi^4 (\ka(\xi,t) \ha)=& \Big(\big(-12 t^2  -120i |\xi|+180 \xi^2-288 t^2 \xi^3-540i |\xi|\xi^3+324 \xi^5\\
&+48it^3 \xi |\xi|-216 t^2 \xi^6-216it|\xi|\xi^6+81 \xi^8+96i t^3 |\xi|\xi^4+16  t^4 \xi^4\big)\ha\\
&-4\big(6-12 t^2 \xi-54i\xi|\xi|+54\xi^3-36t^2 \xi^4-54it |\xi|\xi^4+27\xi^6+\\
&+8it^3|\xi|\xi^2\big)\p_\xi \ha+6\big(-2it\s +6 \xi-4 t^2 \xi^2-12it|\xi|\xi^2 +9 \xi^4\big)\p_\xi^2\ha\\
&+4\big(3 \xi^2-2i t |\xi|\big)\p_\xi^3 \ha+\p_\xi^4 \ha\Big)\ka,
\end{split}
\end{equation*}
where above, the delta Dirac function does not appear.

Using Plancherel identity and the last equality, the first term on the right-hand side of \eqref{Dxi} can be estimated as follows
\begin{equation}
\begin{split}\label{dtheta1}
\||x|^{4+\theta}U(t)\sigma\|\les & \ \|\D(\ps \ha)\|+\|\D(\ps |\xi|\ha)\|+\|\D(\ps \xi^2 \ha)\|+\|\D(\ps \xi^3 \ha)\|+\\
&+\|\D(\ps |\xi|\xi^3 \ha)\|+\|\D(\ps \xi^5\ha)\|+\|\D(\ps |\xi|\xi \ha)\|+\|\D(\ps \xi^6 \ha)\|\\
&+\|\D(\ps |\xi|\xi^6 \ha)\|+\|\D(\ps \xi^8 \ha)\|+\|\D(\ps|\xi|\xi^4 \ha)\|\\
&+\|\D(\ps \xi^4 \ha)\|+\|\D(\ps \p_\xi\ha)\|+\|\D(\ps \xi\p_\xi\ha)\|+\|\D(\ps |\xi|\xi \p_\xi\ha)\|\\
&+\|\D(\ps\xi^3 \p_\xi \ha)\|+\|\D(\ps\xi^4 \p_\xi\ha)\|+\|\D(\ps |\xi|\xi^4 \p_\xi\ha)\|\\
&+\|\D(\ps\xi^6 \p_\xi\ha)\|+\|\D(\ps|\xi|\xi^2 \p_\xi\ha)\|+\|\D(\ps \s \p_\xi^2\ha)\|\\
&+\|\D(\ps\xi \p_\xi^2\ha)\|+\|\D(\ps\xi^2 \p_\xi^2\ha)\|+\|\D(\ps|\xi|\xi^2 \p_\xi^2\ha)\|\\
&+\|\D(\ps\xi^4 \p_\xi^2\ha)\|+\|\D(\ps\xi^2\p_\xi^3\ha)\|+\|\D(\ps|\xi| \p_\xi^3\ha)\|\\
&+\|\D(\ps\p_\xi^4\ha)\|\\
:=& \ E_1+\cdots+E_{28}.
\end{split}
\end{equation}

Next, we will give details of the estimate of some terms in \eqref{dtheta1}. From Lemma \ref{DF}, Proposition \ref{boundhilbert} (with $\omega=|x|^\theta$)  and Proposition \ref{intj} (with $j=2$, $k=0$) it follows that  
\begin{equation*}
\begin{split}\label{E21}
\|E_{21}\|&\les \|J^{2\theta}\h (x^2 \sigma)\|+\||x|^\theta \h(x^2 \sigma)\|\\
&\les \|J^{2\theta}(x^2 \sigma)\|+\||x|^\theta x^2 \sigma\|\\
%&\les \|J^\theta (x^2 \sigma)\|+\|J^\theta(x\sigma)\|+\|J^\theta(x^2 \p_x \sigma)\|+\||x|^{\theta+2}\sigma\|\\
%&\les \|J^{\theta+2}\sigma\|+\||x|^{\theta+2}\sigma\|+\|J^{\theta+2}\p_x \sigma\|+\||x|^{\theta+2}%\p_x\sigma\|\\
&\les \|J^{2(\theta+2)}\sigma\|+\|\lanx^{2+\theta}\sigma\|\\
&\les \|J^{2(4+\theta)}\sigma\|+\||x|^{4+\theta}\sigma\|.
\end{split}
\end{equation*} 
Also by Lemmas \ref{DF} and Proposition \ref{intj} (with $j=3$, $k=2$) we obtain

\begin{equation*}
\begin{split}\label{E271}
\| E_{26}\|\les& \ \|J^{2\theta}\p_x^2(x^3 \sigma)\|+\||x|^\theta \p_x^2(x^3 \sigma)\|\\
\les&\  \|J^{2\theta}(x\sigma)\|+\|J^{2\theta} (x^2\p_x \sigma)\|+\|J^{2\theta}(x^3 \p_x^2 \sigma)\|+\||x|^\theta \p_x^2(x^3 \sigma)\|\\
\les&\  \|J^{2(\theta+1)}\sigma\|+\|\lanx^{\theta+1}\sigma\|+\|J^{2(\theta+2)+1}\sigma\|+\|\lanx^{\theta+2+\frac12}\sigma\|+\\
&+\|J^{2(\theta+3)+2}\|+\|\lanx^{\theta+4}\sigma\|+\|\lanx^{\frac{2(4+\theta)(3+\theta)}{2(4+\theta)-2}}\sigma\|\\
\les& \ \|J^{2(4+\theta)}\sigma\|+\||x|^{4+\theta}\sigma\|.
\end{split}
\end{equation*}

To deal with the other terms we can proceed in a similar way to above. Thus, we can deduce that 
\begin{equation}\label{Ej1}
\| E_j\|\les \|J^{2(4+\theta)}\sigma\|+\||x|^{4+\theta}\sigma\|, \quad j=1,...,28.
\end{equation}
Then, gathering togheter \eqref{dtheta1} and \eqref{Ej1}, we see that  

\begin{equation}
\begin{split}\label{Usigma1}
\||x|^{4+\theta}U(t)\sigma\|\les \|J^{2(4+\theta)}\sigma\|+\||x|^{4+\theta}\sigma\|.
\end{split}
\end{equation}

Now, we will estimate the second term on the right-hand side of \eqref{Dxi}. First we recall that $\hat{z}(\tau,0)=0$, for all $\tau\in [-T,T]$. Moreover, by analogous way to proof of Theorem \ref{non0}, the conservation law \eqref{norm} and the identity \eqref{norma} imply that $\p_\xi \hat {z}(\tau,0)=0$, for any $\tau\in [-T,T]$. As a consequence, putting $z(\tau)$ instead of $\sigma$, in \eqref{Usigma}, we obtain

\begin{equation}\label{zest1}
\||x|^{4+\theta}U(t-\tau)z(\tau)\|\les \|J^{2(4+\theta)}z(\tau)\|+\||x|^{4+\theta}z(\tau)\|.
\end{equation}

In view of
\begin{equation*}
z=\frac12\big(\p_x w (u+v)+w \p_x(u+v)\big),
\end{equation*}
by \eqref{zest} we can write, for all $\tau \in [-T,T]$

\begin{equation}
\begin{split}\label{zx1}
\||x|^{4+\theta}U(t-\tau)z(\tau)\|\les& \ \|J^{2(4+\theta)}(\p_x w (u+v))\|+\|J^{2(4+\theta)}(w \p_x(u+v))\|\\
                    &+\||x|^{4+\theta}w \p_x(u+v)\|+\||x|^{4+\theta}\p_x w (u+v)\|.
\end{split}
\end{equation}

In what follows, we will deal with terms on the right-hand side of the inequality \eqref{zx}. 

In view of definition \eqref{Muv}, the Sobolev's embedding and Proposition \ref{jota}(with $\delta=\nu=2+\theta$ and $\delta=\nu=4$) imply that 

\begin{equation}
\begin{split}
\||x|^{4+\theta}\p_x w (u+v)\|&\leq \||x|^{1+\theta}(u+v)\|_{L^\infty_x}\|x^{3}\p_x w\|\\
&\les (\|J(\lanx^{1+\theta} (u+v))\|)(\|x^2 w\|+\|J(\lanx^{3}w)\|)\\
&\les (\|J^{2+\theta} (u+v)\|+\|\lanx^{2+\theta} (u+v)\|)(\|J^{4}w\|+\|\lanx^{4}w\|)\\
%&\les (\|J^4 u\|+\|J^4 v\|+\|\lanx^{\frac{8}{3}} u\|+\|\lanx^{\frac{8}{3}} v\|)^2\\
&\les M^2.
\end{split}
\end{equation}
Moreover, proceeding as above
\begin{equation}
\begin{split}\label{4theta1}
\||x|^{4+\theta}w \p_x(u+v)\|\leq \||x|^\theta \p_x (u+v)\|_{L^\infty_x}\|x^{4}w\|\les M^2.
\end{split}
\end{equation}

In a similar way to \eqref{j120}, the first term on the right-hand side of \eqref{zx} can be estimated as follows 
\begin{equation}
\begin{split}\label{j1201}
\|J^{2(4+\theta)}(\p_x w (u+v))\| \les M^2.
\end{split}
\end{equation}

From way analogous to \eqref{j1201} we also deduce that
\begin{equation}
\begin{split}\label{j1231}
\|J^{2(4+\theta)}(w \p_x(u+v))\|&\les M^2.
\end{split}
\end{equation}
Finally, the inequalities \eqref{zx1}--\eqref{j1231} imply that, for all $\tau\in [-T,T]$

\begin{equation}
\begin{split}\label{zpx1}
\||x|^{4+\theta}U(t-\tau)z(\tau)\|\les M^2.
\end{split}
\end{equation}
Therefore, by Plancherel identity, \eqref{Dxi}, \eqref{Usigma1} and \eqref{zpx1} we obtain

\begin{equation*}
\begin{split}
\||x|^{4+\theta}w(t)\|&\les \ \|J^{2(4+\theta)}\sigma\|+\||x|^{4+\theta}\sigma\| +|t|M^2\\
&\les \ \|J^{2(4+\theta)}\phi\|+\|J^{2(4+\theta)}\varphi\|+\||x|^{4+\theta}\phi\|+\||x|^{4+\theta}\varphi\| +|t|M^2.
\end{split}
\end{equation*}
By the last inequality we conclude the proof of Theorem.
\end{proof}

\begin{proof}[Proof of Theorem \ref{low4}]

Follows similar ideas contained in \cite{flp1}.

\end{proof}

\begin{proof}[Proof of Theorem \ref{low4theta}]

Assume that $\theta\in (0,1/2)$. Let $u$ and $v$ solutions of the IVP \eqref{b} with initial data $\phi$ and $\varphi$, respectively. As before, let $\sigma=\phi-\varphi$ and  $w=u-v$. Hence, we see that $w$ satisfies the following equation

\begin{equation}\label{bw}
w_{t}+\mathcal{H}\partial_{x}^{2}w+\p_x^3 w +u\p_x w+\p_x v w=0, \;\;x,t\in \R.
\end{equation}

The global well-posedness in Sobolev spaces yield us $u,v\in C([-T,T];H^{8+2\theta})$, for all $T>0$. Moreover, the Theorem \ref{low4} implies that $w\in L^{\infty}([-T,T];\z_{8+2\theta,4})$. %Also, from Theorem A it follows that $v,u\in C([-T,T];\z_{8,5/2-})$. 
Thus, the constant 

\begin{equation*}
M_1:=\sup_{[-T,T]}\{\|u(t)\|_{H^{8+2\theta}}+\|v(t)\|_{H^{8+2\theta}}+\|w(t)\|_{\z_{8+2\theta,4}}\},
\end{equation*}
is finite.

We observe that by \eqref{media} and \eqref{norm}, the solution $w$ of \eqref{bw} satisfies the following conservation laws

\begin{equation}\label{law1}
\int w(x,t)dx=\int \sigma(x)dx
\end{equation}
and
\begin{equation}\label{law2}
\frac{d}{dt}\int x w(x,t)dx=\frac12(\|\phi\|^2-\|\varphi\|^2).
\end{equation}

Then, from \eqref{law1}, \eqref{law2} and the hypothesis \eqref{norma}--\eqref{v1} it follows that 

\begin{equation}\label{consw}
\int w(x,t)dx=0
\end{equation}

and

\begin{equation}\label{consxw}
\int x w(x,t)dx=0,
\end{equation}

for all $t$ in which the solution there exists.

Multiplying \eqref{bw} by $\lanxN^{2+2\theta}x^6 w$ and integrating on $\R$ we obtain

\begin{equation}\label{intwn}
\begin{split}
\frac12 \|\lanxN^{1+\theta}x^3 w\|^2+\int \lanxN^{1+\theta}x^3\h \p_x^2 w\lanxN^{1+\theta}&x^3 w +\int \lanxN^{2+2\theta}x^6w\p_x^3 w\\                                   
+\int \lanxN^{2+2\theta}&x^6(u\p_x w+\p_x v w)=0.
\end{split}
\end{equation}

Next, we will estimate the second term on the left-hand side of \eqref{intwn}.

It's seen that the identities

\begin{equation*}
x\h \p_x^2 w=\h \p_x^2(xw)-2\h \p_x w,
\end{equation*} 

\begin{equation*}
x^2\h \p_x^2 w=\h \p_x^2(x^2w)-4\h \p_x (xw)+2\h w
\end{equation*} 
and \eqref{consw} imply that

\begin{equation*}
x^3 \h \p_x^2 w=\h\p_x^2(x^3 w)-6\h \p_x (x^2 w)-2\h (xw).
\end{equation*}

Thus, %multiplying the last identity by $\lanxN^{1+\theta}$ we obtain

\begin{equation}
\begin{split}
\lanxN^{1+\theta}x^3 \h \p_x^2 w&=\lanxN^{1+\theta}\h\p_x^2(x^3 w)-6\lanxN^{1+\theta}\h \p_x (x^2 w)-2\lanxN^{1+\theta}\h (xw)\\
&:= \ \mathcal A+\mathcal B+\mathcal C.
\end{split}
\end{equation}

Next, we will deal with each one of the terms above.

We can write

\begin{equation}\label{ma}
\begin{split}
\mathcal A&=[\lanxN^{1+\theta};\h]\p_x^2(x^3 w)+\h(\lanxN^{1+\theta}\p_x^2(x^3 w))\\
&=\  \mathcal A_1 +\h\p_x^2 (\lanxN^{1+\theta}x^3 w)-2\h (\p_x \lanxN^{1+\theta}\p_x(x^3 w))-\h(\p_x^2\lanxN^{1+\theta}x^3 w)\\
&= \ \mathcal A_1+\cdots+\mathcal A_4.
\end{split}
\end{equation}

Thus, by the Calder\'on commutator estimate (see Theorem 6 in \cite{fp} and references therein) we get

\begin{equation}\label{ma1}
\mathcal A_1\les \|\p_x^2\lanxN^{1+\theta}\|_{L^\infty}\|x^3 w\|\les M_1.
\end{equation}

Also, the fact that $\h$ is a bounded operator in $L^2$ implies that

\begin{equation}\label{ma4}
\mathcal A_4\les \|\p_x^2\lanxN^{1+\theta}x^3 w\|\les \|\p_x^2\lanxN^{1+\theta}\|_{L^\infty}\|x^3 w\|\les M_1.
\end{equation}

By returning the term $\mathcal A_2$ in \eqref{intwn} and using Plancherel identity we see that

\begin{equation}\label{ma2}
\int \h \p_x^2 (\lanxN^{1+\theta}x^3w)\lanxN^{1+\theta}x^3 w=0.
\end{equation}
To estimate $\mathcal A_3$, the inequality $|x\p_x \lanxN|\les \lanxN $ yields us 

\begin{equation}\label{ma3}
\begin{split}
\|\mathcal A_3\| &\les \ \|\lanxN^\theta \p_x \lanxN x^2 w\|+\|\lanxN^\theta \p_x \lanxN x^3 \p_x w\|\\
        &\les \ \|\lanxN^{1+\theta}x w\|+\underbrace{\|\lanxN^{1+\theta}x^2 \p_x w\|}_{P}\\
        &\les \ M_1+P.
\end{split}
\end{equation}

Using inequality $\lanxN \les 1+|x|$, Proposition \ref{boundhilbert} (with $\omega=\lanxN$) and  identity $\widehat{\p_x (x^2 w)}(0,t)=0$, the term $B$ can be estimated as follows

\begin{equation}
\begin{split}
\mathcal B&\les \ \|\lanxN^\theta \h \p_x(x^2 w)\|+\|\lanxN^\theta x \h \p_x(x^2 w)\|\\
 &\les \ \|\lanxN^{\theta}\p_x(x^2 w)\|+\|\lanxN^\theta \h(x \p_x(x^2 w))\|\\
 &\les \ \|\lanxN^\theta x w\|+\|\lanxN^\theta x^2 \p_x w\|+\|\lanxN^\theta x^2 w\|+\|\lanxN^\theta x^3 \p_x w\|\\
 &\les \ M_1+\|\lanxN^{1+\theta}x^{3}w\|+\underbrace{\|\lanxN^\theta \lanx^{3} \p_x w\|}_{F}.
\end{split}
\end{equation}

 The third term on the right-hand side of the last inequality can be estimated as

\begin{equation}
\begin{split}
F&\les \ \|J(\lanxN^\theta \lanx^3 w)\|+\|x^3 w\|+\|\lanxN^\theta \lanx^2 w\|\\
 &\les \ \|J^{1+\theta}(\lanx^3 w)\|+\|\lanxN^{1+\theta}\lanx^3 w\|\\
 &\les \ \|J^{4(1+\theta)} w\|+\|\lanx^4 w\|+\|\lanxN^{1+\theta}\lanx^3 w\|\\
 &\les \ M_1+\|\lanxN^{1+\theta} x^3 w\|,
\end{split}
\end{equation}

where above its used Lemma \ref{jota} (with $\gamma=\nu=1+\theta$ and $\gamma=4(1+\theta)$, $\nu=4$).

Also by the Proposition \ref{boundhilbert} and equality \eqref{consxw} we get

\begin{equation}
\begin{split}
\mathcal C& \les \ \|\lanxN^\theta\h (xw)\|+\|\lanxN^\theta x \h (xw)\|\\
  &\les \ \|\lanxN^\theta x w\|+\|\lanxN^\theta \h (x^2 w)\|\\
  &\les \ \|\lanxN^\theta x w\|+\|\lanxN^\theta x^2 w\| \\
  &\les \ \|\lanxN^{1+\theta}\lanx^3 w\|\\
  &\les  \ M_1+\|\lanxN^{1+\theta} x^3 w\|.
\end{split}
\end{equation}

In what follows we deal with the other terms in \eqref{intwn}.

An application of integration by parts gives us

\begin{equation}
\begin{split}
\int \lanxN^{2+2\theta}x^6w\p_x^3 w&=-\frac12\int \p_x^2(x^6 \lanxN^{2+2\theta})w\p_xw-\frac32\int \p_x(x^6 \lanxN^{2+2\theta})w\p_x^2w\\
&\les \ \|x^3 \lanxN^{1+\theta}w\|\|x\lanxN^{1+\theta}\p_x w\|+\underbrace{\|\lanx^2 \lanxN^{1+\theta}\p_x^2 w\|}_{L}\|x^3 \lanxN^{1+\theta}w\|\\
&\les \ \|x^3 \lanxN^{1+\theta}w\|(F+L),
\end{split}
\end{equation}
where above we also used the inequalities 

$$|\p_x(x^6 \lanxN^{2+2\theta})|\les |x|^5 \lanxN^{2+2\theta} \quad \mbox{and} \quad |\p_x^2(x^6 \lanxN^{2+2\theta})|\les x^4 \lanxN^{2+2\theta}.$$

To estimate $L$, we observe that by the equality

$$\p_x^2 (\lanx^2 \lanxN^{1+\theta}w)=\p_x^2(\lanx^2 \lanxN^{1+\theta})w+2\p_x(\lanx^2 \lanxN^{1+\theta})\p_x w+\lanx^2 \lanxN^{1+\theta}\p_x^2 w$$ 
and Lemma \ref{jota} it follows that

\begin{equation}
\begin{split}
 L &\les \|J^2(\lanx^2 \lanxN^{1+\theta}w)\|+M_1\\
 &\les \|J^6(\lanxN^{1+\theta}w)\|+\|\lanx^3 \lanxN^{1+\theta}w\|+M_1\\
 &\les \|J^{8+2\theta}w\|+\|\lanxN^{4+\theta}w\|+\|x^3 \lanxN^{1+\theta}w\|+M_1\\
 &\les \|x^3 \lanxN^{1+\theta}w\|+M_1,
\end{split}
\end{equation}
where above, we used Lemma \ref{jota} moreover the inequality $\lanxN^3\les (1+x^3)$.

In a similar way to the term $L$, we conclude that the term $P$ (in \eqref{ma3}) satisfies

\begin{equation}\label{P}
P\les \|x^3 \lanxN^{1+\theta}w\|+M_1.
\end{equation}

Hence, by \eqref{ma}--\eqref{ma3}

\begin{equation}
\mathcal A \les \|x^3 \lanxN^{1+\theta}w\|+M_1.
\end{equation}

About the fourth term in \eqref{intwn}, integration by parts, Sobolev's embedding and the inequality %$1\leq \lanxN$, 
 $|\p_x(x^6 \lanxN^{2+2\theta})|\les (1+x^6)\lanxN^{2+2\theta}$  imply that

\begin{equation}
\begin{split}
\int \lanxN^{2+2\theta}x^6w(u\p_x w+\p_x v w)=&-\frac12\int \big(\p_x(x^6 \lanxN^{2+2\theta})u+x^6 \lanxN^{2+2\theta}\p_x u\big)w^2+\\
&+\int x^6 \lanxN^{2+2\theta}w^2 \p_x v\\
\les&  \ (\|u\|_{L^\infty}+\|\p_x v\|_{L^\infty})\|x^3 \lanxN^{1+\theta}w\|^2+M_1^2\\
\les& \ M_1(M_1+\|x^3 \lanxN^{1+\theta}w\|^2).
\end{split}
\end{equation}

Gathering together the above inequalities we obtain

\begin{equation*}
\frac{d}{dt}\|\lanxN^{1+\theta}x^3 w\|^2 \les (1+M_1)\|\lanxN^{1+\theta}x^3 w\|^2+M_1^2.
\end{equation*}

Using Gronwall's Lemma we conclude that

\begin{equation*}
\sup_{[-T,T]}\|\lanxN^{1+\theta}x^3 w\|\leq c(T).
\end{equation*}

Therefore, from the last inequality we conclude the proof.

\end{proof}

\section{Proof of Theorem \ref{2t}}\label{2t1}

\begin{proof}[Proof of Theorem \ref{2t}]

First of all, as we already mentioned the IVP \eqref{b} is GWP in $\dot{\z}_{s,r}$, where $5/2\leq r<7/2$ and $s\geq 2r$. Hence, from the hypothesis $\phi \in \dot{\z}_{9+2\theta,4+\theta}$ it follows that $u\in C(\R;\z_{9+2\theta,7/2-})$. 

We recall that the integral equation associated with IVP \eqref{b} is given by

\begin{equation}
\begin{split}\label{int1}
u(t)=\ &U(t)\phi-\frac12\int_0^t U(t-\tau) (\p_x u^2)(\tau)d\tau.
\end{split}
\end{equation}

Then, \eqref{int1} and Plancherel's identity imply that

\begin{equation}\label{2.9}
\begin{split}
 \p_\xi^4(\widehat{u(t)})=& \p_\xi^4(\mu(\xi,t)\hat \phi)-\int_0^t  \p_\xi^4(\mu(\xi,t-\tau)\hat{\kappa}(\tau))d\tau,
\end{split}
\end{equation} 

where $\kappa:=\frac12 \p_x u^2 $.

Since $\phi$ has zero mean value, we can use the identity \eqref{der4} to write 

\begin{equation}
\begin{split}\label{der4phi}
\p_\xi^4 (\ka(\xi,t) \pa)=& \Big(\big(-12 t^2  -120i |\xi|+180 \xi^2-288 t^2 \xi^3-540i |\xi|\xi^3+324 \xi^5\\
&+48it^3 \xi |\xi|-216 t^2 \xi^6-216it|\xi|\xi^6+81 \xi^8+96i t^3 |\xi|\xi^4+16  t^4 \xi^4\big)\pa\\
&-4\big(-4i t \de+6-12 t^2 \xi-54i\xi|\xi|+54\xi^3-36t^2 \xi^4-54it |\xi|\xi^4+27\xi^6+\\
&+8it^3|\xi|\xi^2\big)\p_\xi \pa+6\big(-2it\s +6 \xi-4 t^2 \xi^2-12it|\xi|\xi^2 +9 \xi^4\big)\p_\xi^2\pa\\
&+4\big(3 \xi^2-2i t |\xi|\big)\p_\xi^3 \pa+\p_\xi^4 \pa\Big)\ka\\
=& \ 16it\mu \p_\xi \hat \phi\delta+A_1+\cdots+A_{28},
\end{split}
\end{equation}
where the terms $A_j$ are such that $A_j=A_j(t,\xi,\hat \phi)$.

Using \eqref{2.9} and \eqref{der4phi} we obtain

\begin{equation}
\begin{split}\label{der4t}
\p_\xi^{4} \hat{u}(t,\xi)=&\underbrace{16i\Big(-t\ps(t,\xi)\p_\xi \pa+\int_0^t (t-\tau)\ps(t-\tau,\xi)\p_\xi \hat{\kappa}d\tau\Big)}_{G(t,\xi)}\delta +\sum_{1\leq j \leq 28} A_j(t,\xi,\hat \phi)\\
&-\sum_{1\leq j \leq 28}\int_0^t A_j(t-\tau,\xi,\hat \kappa(\tau))d\tau\\
=& \ G(t,0)\delta+\sum_{1\leq j \leq 28} A_j(t,\xi,\hat \phi)-\sum_{1\leq j \leq 28}\int_0^t A_j(t-\tau,\xi,\hat \kappa(\tau))d\tau.
\end{split}
\end{equation}

The conservation law \eqref{norm} implies that
\begin{equation}
\p_\xi \hat \kp (\tau,0)=\frac{i}{2}\widehat{u^2}(\tau,0)=\frac{i}{2}\|\phi\|^2.
\end{equation}
Since $\mu(t,0)=\mu(t-\tau,0)=1$, by the last identity we can write
\begin{equation}\label{G}
\begin{split}
 G(t,0)=& \ 16i\Big(-t \p_\xi \hat{\phi}(0)+ \int_0^t (t-\tau)\p_\xi \hat{\kappa}(\tau,0)d\tau\Big)\\
       %=& -16\Big(\int x \phi(x)dx+\frac12 \int_0^t(t-\tau)\widehat{u^2}(\tau,0)d\tau\Big)\\
       %=& -16\Big(\int x \phi(x)dx+\frac12 \int_0^t(t-\tau)\|u(\tau)\|^2 d\tau\Big)\\
       =& \ -16\Big(\int x \phi(x)dx+\frac12 \int_0^t(t-\tau)\|\phi\|^2 d\tau\Big)\\
       %=& -16\Big(\int x \phi(x)dx-\frac{(t-\tau)^2\|\phi\|^2}{4} \Big |_{0}^t \Big)\\
       =& -16t\Big(\int x \phi(x)dx+\frac{t}{4}\|\phi\|^2 \Big)\\
       =& \ 0,
\end{split}
\end{equation}
if $$t=t^*=\frac{-4}{\|\phi\|^2}\int x\phi(x)dx.$$

Therefore, putting $t=t^*$ in \eqref{der4t} we obtain

\begin{equation}\label{intt}
\begin{split}
\p_\xi^4(\widehat{u(t^*)})=& \sum_{1\leq j \leq 28} A_j(t^*,\xi,\hat \phi)-\sum_{1\leq j \leq 28}\int_0^{t^*} A_j(t^*-\tau,\xi,\hat \kappa(\tau))d\tau,
\end{split}
\end{equation} 
where the delta Dirac function is not contained in the last identity. In the follow argument we are always assuming $t=t^*$. Thus, taking $D_\xi^\theta$ in both sides of \eqref{intt} and using Plancherel identity 
it's seens that

\begin{equation}\label{inttp}
\begin{split}
(|x|^{4+\theta}u(t))^{\wedge}(\xi)=& \sum_{1\leq j \leq 28} D_\xi^\theta A_j(t,\xi,\hat \phi)-\sum_{1\leq j \leq 28}\int_0^t D_\xi^\theta A_j(t-\tau,\xi,\hat \kappa(\tau))d\tau.
\end{split}
\end{equation}

The next step is to show that all terms on the right-hand side of \eqref{inttp} belongs to $L^2(\R)$.
We will estimate some only terms.

The Lemma \ref{DF} and Proposition \ref{intj} (with $j=2$, $k=4$) imply that 
\begin{equation}
\begin{split}
\|D_\xi^{\theta}A_{25}(t,\cdot,\hat \phi(\cdot))\|\les& \ \|J^{2\theta}\p_x^4(x^2 \phi)\|+\||x|^\theta \p_x^4(x^2 \phi)\|\\
                        \les& \ \|J^{2\theta}\p_x^2 \phi\|+\|J^{2\theta}(x\p_x^3 \phi)\|+\|J^{2\theta}(x^2 \p_x^4 \phi)\|+\||x|^\theta \p_x^4(x^3 \phi)\|\\
                        \les& \ \|J^{2(\theta+1)+3}\phi\|+\|\lanx^{\theta+1+\frac{3}{2}}\phi\|+\|J^{2(\theta+4)}\phi\|+\|\lanx^{4+\theta}\phi\|\\
                        &+\|\lanx^{\frac{2(4+\theta)(2+\theta)}{2(4+\theta)-4}}\phi\|\\
                        \les& \ \|J^{2(4+\theta)}\phi\|+\||x|^{4+\theta}\phi\|.
\end{split}
\end{equation}

In addition, an application of Lemma \ref{DF}, Lemma \ref{boundhilbert} and Proposition \ref{intj} (with $j=3$, $k=1$) gives us

\begin{equation}
\begin{split}
\|D_\xi^{\theta}A_{28}(t,\cdot,\hat \phi(\cdot))\|\les& \ \|J^{2\theta}\h \p_x (x^3 \phi)\|+\||x|^\theta \h \p_x (x^3 \phi)\|\\
                        \les& \ \|J^{2\theta}(x^2 \phi)\|+\|J^{2\theta}(x^3\p_x \phi)\|+\||x|^\theta \p_x(x^3 \phi)\|\\
                        \les& \ \|J^{2(\theta+3)+1}\phi\|+\|\lanx^{\theta+3+\frac{1}{2}}\phi\|+\|J^{2(\theta+4)}\phi\|+\|\lanx^{\frac{2(4+\theta)(3+\theta)}{2(4+\theta)-1}}\phi\|\\
                        \les& \ \|J^{2(4+\theta)}\phi\|+\||x|^{4+\theta}\phi\|.
\end{split}
\end{equation}

Also, by using Proposition \ref{boundhilbert} (with $\omega=|x|^\theta$) we get

\begin{equation*}
\begin{split}\label{A21}
\|D_\xi^{\theta}A_{21}(t,\cdot,\hat \phi(\cdot))\|&\les \|J^{2\theta}\h (x^2 \phi)\|+\||x|^\theta \h(x^2 \phi)\|\\
&\les \|J^{2\theta}(x^2 \phi)\|+\||x|^\theta x^2 \phi\|\\
%&\les \|J^\theta (x^2 \sigma)\|+\|J^\theta(x\sigma)\|+\|J^\theta(x^2 \p_x \sigma)\|+\||x|^{\theta+2}\sigma\|\\
%&\les \|J^{\theta+2}\sigma\|+\||x|^{\theta+2}\sigma\|+\|J^{\theta+2}\p_x \sigma\|+\||x|^{\theta+2}%\p_x\sigma\|\\
&\les \|J^{2(\theta+2)}\phi\|+\|\lanx^{2+\theta}\phi\|.
%&\les \|J^{2(4+\theta)}\sigma\|+\||x|^{4+\theta}\sigma\|.
\end{split}
\end{equation*} 

To deal with the other terms $A_j(t,\xi,\hat \phi)$ we can proceed in a similar way, to obtain,
for all $j=1,...,28$ 
\begin{equation}\label{Aj}
\|D_\xi^\theta A_j(t,\cdot,\hat \phi(\cdot))\|\les \|J^{2(4+\theta)}\phi\|+\||x|^{4+\theta}\phi\|.
\end{equation}

About the integral terms in \eqref{inttp}, putting $k=\frac12 \p_x u^2$ instead of $\phi$, in the above estimates, follows that, for any $j=1,...,28$ and $\tau\in [0,t]$

\begin{equation}\label{kpest}
\|D_\xi^\theta A_j(t,\cdot,\hat \kappa(\tau))\|\les \|J^{2(4+\theta)}\kp(\tau)\|+\||x|^{4+\theta}\kp(\tau)\|.
\end{equation}

%\begin{equation}
%\begin{split}\label{Uphi}
%\||x|^{4+\theta}U(t)\phi\|\les \|J^{2(4+\theta)}\phi\|+\||x|^{4+\theta}\phi\|.
%\end{split}
%\end{equation}

%\begin{equation}\label{kpest}
%\||x|^{4+\theta}U(t-\tau)\kappa(\tau)\|\les \|J^{2(4+\theta)}\kp(\tau)\|+\||x|^{4+\theta}\kp(\tau)\|.
%\end{equation}

Next, as in \eqref{j120} follows that
\begin{equation}
\begin{split}\label{j2theta}
\|J^{2(4+\theta)}\kp(\tau)\|\les \|u(\tau)\|_{H^{9+2\theta}}^2 .
\end{split}
\end{equation}

In addition, from Sobolev's embedding and Proposition \ref{jota}, with $\beta=\frac{1}{8+2\theta}$ and $\nu=\frac{(2+\theta)(8+2\theta)}{7+2\theta}$, follow that

\begin{equation}\label{kappa}
\begin{split}
 \||x|^{4+\theta}\kappa(\tau)\|\les & \ \||x|^{4+\theta}u u_x\|\\
                               \les & \ \|x^2 u\|_{L^\infty}\||x|^{2+\theta}u_x\|\\
                               \les & \ \|\lanx^{2+\theta} u\|_{L^\infty}(\|\p_x(|x|^{2+\theta}u)\|+\||x|^{1+\theta}u\|)\\
                               \les & \ \|J(\lanx^{2+\theta}u)\|(\|J(\lanx^{2+\theta}u)\|+\|\lanx^{1+\theta}u\|)\\
                               \les & \ \|J^{2(4+\theta)}u\|^2+\|\lanx^{\nu}u\|^2.
\end{split}
\end{equation}
We observe that the terms above are finite, in view of $\nu<7/2$ and $u\in C(\R;\z_{9+2\theta,7/2-})$.

From \eqref{kpest}, \eqref{j2theta} and \eqref{kappa}

\begin{equation}\label{kappa1}
\begin{split}
\int_0^t \|D_\xi^\theta A_j(t,\cdot,\hat \kappa(\tau))\|d\tau &\les \int_0^t \big(\|J^{2(4+\theta)}u\|^2+\|\lanx^{\nu}u\|^2\big)d\tau\\
&\les \sup_{[0,T]}\|u(t)\|_{\z_{s,\nu}}^2, \quad j=1,...,28.
\end{split}
\end{equation}
%where $j=1,...,29$ and $j\neq 13$.

Therefore, using \eqref{intt}, \eqref{Aj} and \eqref{kappa1} we conclude that

$$u(t^*) \in \z_{9+2\theta,4+\theta}.$$ 

This finishes the proof. 

\end{proof}

\section{Uniqueness results}\label{uniqueness1}

\begin{proof}[Proof of Theorem \ref{uni1}]
Let $w:=u-v$, then the IVP \eqref{b} implies that 
\begin{equation}\label{bu1}
\p_t w+\h \p_x^2 w+\p_x^3 w+u\p_x w+w\p_x v=0, \quad (x,t)\in \R\times [0,T].
\end{equation}
The hypothesis \eqref{interval} gives us, for all $x\in I$
\begin{equation}\label{bw1}
w(x,0)=\p_x w(x,0)=\p_t w(x,0)=\p_x^3w(x,0)=0.
\end{equation}
By \eqref{bu1} and \eqref{bw1}, we obtain 
\begin{equation}\label{hbw}
\h \p_x^2 w(x,0)=0, \quad \mbox{for any} \  x\in I.
\end{equation}
Using \eqref{bw1} and \eqref{hbw} it follows that
\begin{equation}
\p_x^2w(x,0)=\h\p_x^2 w(x,0)=0, \quad \forall x\in I,
\end{equation}
where $\p_x^2 w(\cdot,0)\in H^s(\R), s>1/2$.

Thus, from Lemma \ref{hzero} we obtain $\p_x^2 w(\cdot,0)\equiv 0$. 
%From here, we conclude that there exist constants $c$ and $d$ such that $w(x,0)=cx+d$, for any $x\in \R$. By \eqref{interval} we deduce that
%$$cx+d=0,$$
%for all $x\in I$. Taking the derivative in both sides of the last identity we see that $c=0$. %Again, in view of \eqref{interval} we obtain $d=0$. 

Therefore $w(x,0)=0$, for all $x\in \R$, and by the uniqueness in $H^s(\R)$ it follows that

$$u(x,t)=v(x,t),\quad (x,t)\in \R\times [0,T].$$

This finishes the proof.
\end{proof}

\begin{proof}[Proof of Theorem \ref{uni2}]
Let $w(x,t)=(u-v)(x,t)$, then by the IVP \eqref{b}
\begin{equation}\label{bu2}
\p_t w+\h \p_x^2 w+\p_x^3 w+u\p_x w+w\p_x v=0, \quad (x,t)\in \R\times [0,T].
\end{equation}
From \eqref{indata} $w(x,0)=0$, for any $x\in I$. Then $\p_x^j w(x,0)=0$, for any $x\in I$, where $j=0,...,3$. As a consequence, by \eqref{bu2}
\begin{equation}\label{ptw}
\h \p_x^2 w(x,0)=-\p_t w(x,0), \quad \mbox{for all} \ x \in I.
\end{equation}
Using \eqref{intcon} and \eqref{ptw} we conclude that
\begin{equation}
\begin{split}
\int_{|x|\leq R}|\p_t u(x,0)-\p_t v(x,0)|^2 dx=&\int_{|x|\leq R}|-\p_t w(x,0)|^2dx\\
                                                  =&\int_{|x|\leq R}|\h \p_x^2 w(x,0)|^2 dx\\
                                                  \leq& c_N R^N,                                                 \quad \mbox{as} \quad R\downarrow 0.
\end{split}
\end{equation}
Therefore, by the Lemma \eqref{fzero} 
\begin{equation}
\p_x^2 w(x,0)=0, \quad \mbox{for all} \  x\in \R.
\end{equation}
The rest run as in the proof of Theorem \ref{uni1}.
%Thus, there exist constants $c$ and $b$ such that $w(x,0)=cx+b$, $x\in \R$. Since $0\in I$, by \eqref{indata} we conclude that $b=u(0,0)-v(0,0)=0$. Hence $w(x,0)=cx$, $x\in \R$. Choosing $x\in I$, %with $x\neq 0$, follows by \eqref{indata} that
%\begin{equation}
%0=w(x,0)=cx,
%\end{equation}
%then $c=0$.
%Therefore $w(x,0)=0$, $x\in \R$. In view of uniqueness in $H^s(\R)$, follows that
%$$u(x,t)=v(x,t),\quad (x,t)\in \R\times [0,T].$$

This ends the proof.
\end{proof}

%Rascunho
%\noindent {\bf Theorem (Uniqueness results)} {\em
%Suppose that $u\in C([0, T ] :\dot{\z}_{7,7/2-})$ is a solution of the IVP \eqref{b}. If there exist three different times $t_1,t_2,t_3\in [0,T ]$ such that $u(\cdot, t_j)-0(t_j)\in \dot{\z}_{7,7/2}$, for $j=1,2,3$, then $u(x,t)\equiv 0$.}
%\medskip
\section*{Acknowledgment}

The author would like to thank Prof. Ademir Pastor, for his attention in reading this manuscript.

\end{document}